\newtheorem{theorem}{Theorem}[section]
\newtheorem{lemma}[theorem]{Lemma}
\newtheorem{proposition}[theorem]{Proposition}
\newtheorem{corollary}[theorem]{Corollary}
\theoremstyle{definition}
\newtheorem{definition}[theorem]{Definition}
\newtheorem{example}[theorem]{Example}
\numberwithin{equation}{section}
\newcommand{\R}{\mathbb{R}}
\newcommand{\Q}{\mathbb{Q}}
\newcommand{\g}{\mathfrak{g}}
\newcommand{\h}{\mathfrak{h}}
\newcommand{\Z}{\mathbb{Z}}
\newcommand{\T}{\mathbb{T}}
\newcommand{\N}{{\mathbb N}}
\newcommand{\X}{\operatorname{\mathcal X}}
\newcommand{\Aff}{\operatorname{Aff}}
\newcommand{\Aut}{\text{Aut}}
\newcommand{\bd}{\mathop{Bd}}
\begin{document}

\noindent                                             
\begin{picture}(150,36)                               
\put(5,20){\tiny{Submitted to}}                       
\put(5,7){\textbf{Topology Proceedings}}              
\put(0,0){\framebox(140,34){}}                        
\put(2,2){\framebox(136,30){}}                        
\end{picture}                                        
\vspace{0.5in}

\renewcommand{\bf}{\bfseries}
\renewcommand{\sc}{\scshape}
\vspace{0.5in}

\title[Semigroup Problem]%
{The Semigroup Problem for Central
Semidirect Products of $\R^n$ with $\R^m$}


\author{Kevin Lui}
\address{Department of Mathematics, University of California, Santa Barbara, 552 University Rd, Santa Barbara, CA 93106}
\email{kevinywlui@gmail.com}

\author{Viorel Nitica}
\address{Department of Mathematics, West Chester University, West Chester, PA 19380}
\email{vnitica@wcupa.edu}
\thanks{VN is the corresponding author. VN was partially supported by Simons Foundation Grant 208729.}

\author{Siddharth Venkatesh}
\address{Department of Mathematics, University of California Berkeley, Berkeley, CA 94720}
\email{sidnv91@gmail.com}

\subjclass[2010]{Primary 22A15, 54H15; Secondary 22A25}

\keywords{semigroup, Lie group, solvable group, dense set of generators, minimal set of generators, semigroup problem, affine transformation}
\thanks {This paper was written during the Summer 2013 program REU at Pennsylvania State University, supported by the NSF grant  DMS-0943603.  K. Lui and S. Venkatesh are undergraduate students. V. Nitica was one of the faculty coordinators. The authors would like to thank the anonymous referee for many corrections and suggestions that helped improve the quality of the paper. Several contributions of the referee, in particular Lemma 2.7, are mentioned in the text.}

\begin{abstract}
    In this paper we prove two new results about closed subsemigroups in the
    family of solvable Lie groups that are central semidirect products of $\R^m$ and $\R^n$, $H_{mn} := \R^{m} \ltimes_{\phi} \R^{n}.$
    An example of such group is the group of orientation preserving affine transformations of the line $\Aff^+$. 
    We assume that $\phi$, the structure homomorphism, is continuous and maps nontrivially into the center
    of $\Aut(\R^{n})$. The first result states that
    the closure of a subsemigroup generated by a subset in $H_{mn},$ that is not included in a maximal subsemigroup
    with nonempty interior, is actually a subgroup. The second result states that among the subsets in $H_{mn}$ that are
    not included in a maximal proper subsemigroup, those that generate $H_{mn}$ as a closed subsemigroup are dense.
    Results of this nature were obtained before only for abelian and nilpotent Lie groups and their compact extensions.
    As an application of the technique developed in the paper, we also find the minimal number of generators as a closed group and as a closed semigroup of $H_{mn}$.
\end{abstract}

\maketitle

\vspace{0.5cm}

\section{\bf Introduction}

We start with some generalities that put the problem studied in this paper in a larger context.

A typical problem in topological dynamics is to investigate (topologically) transitive transformations, that is,
continuous maps that have a dense forward orbit. One way to construct such transformations is to consider a map
that is already known to be transitive and to look at its extensions. Given a continuous transformation $f:\X\to \X$ of a topological space $\X$, a Lie group $G$, and
a continuous map $\beta:\X\to G$ called a {\it cocycle}, one can
define an {\em extension with fiber $G$ and base $(f,\X)$} by $f_{\beta}:\X\times G\to \X\times G, f_{\beta}(x,\gamma)=(f
x,\gamma\beta(x))$. Examples of transitive maps $f$ for which one can investigate the existence of transitive extensions are
provided by the restrictions $f:\X\to \X$
of a diffeomorphism $f$ of a compact manifold to a hyperbolic basic set $\X$. We refer to~\cite{KH}
for basic definitions and for an introduction to the theory of hyperbolic dynamic systems.

One can find in \cite{MNT1} a general conjecture about
transitivity of such extensions: modulo the obstruction that
the range of the cocycle is included in a maximal subsemigroup with nonempty
interior, the set of $C^r$ transitive cocycles contains an open and dense
subset of $C^r$ cocycles. The conjecture is proved for various classes of Lie groups $G$ that are
semidirect products of compact and abelian/nilpotent groups in
\cite{MNT1,MNT2,MNT3,MNT4,N-nil,NP,N-T-nil}.

The main goal of this paper is to focus on a related semigroup problem. The proof of transitivity of
$f_{\beta}$ in \cite{MNT1} is based on showing that the set $\mathcal{L}_{\beta}(x)$ of heights of $\beta$ over
a periodic point $x$ of $f$ is the whole fiber $G$. To obtain that $\mathcal{L}_{\beta}(x) =G$, one has to prove that for a typical family $F\in G^p$
that generates $G$ as a closed group and which is not contained in a maximal subsemigroup with
nonempty interior, $F$ generates $G$ as a closed semigroup as well. We refer to this
question as the \emph{Semigroup Problem}. The problem was solved for $G = \R^n$ \cite[Lemma 5]{NP} and more generally $G=K\times \R^n$ where $K$ is a compact Lie group \cite[Theorem 5.10]{MNT1}. It is
also solved for $G = SE(n)$ \cite[Theorem 6.8]{MNT1} and for Heisenberg groups in \cite[Theorem 8.6]{N-T-nil}.

In this paper, we solve the Semigroup Problem for a family of solvable groups related to the group of orientation preserving affine transformations of the line $\Aff^+$. We recall that $\Aff^+$ is the simplest non-nilpotent solvable Lie group and the unique simply connected, nonabelian, 2-dimensional Lie group. The groups under consideration are semidirect products $\R^{m} \ltimes_{\phi} \R^{n}$, where the structure homomorphism $\phi$ is continuous and maps into the center of $\Aut(\R^{n})$, the set of positive scalar matrices. To simplify the notation, we denote these groups by $H_{mn}$.

The structure of the maximal subsemigroups with nonempty interior in a simply connected solvable Lie group is described in a fundamental paper by Lawson~\cite{L}.
They are in one-to-one correspondence, via the exponential map, with closed half-spaces with boundary a Lie subalgebra~\cite[Theorem 12.5]{L}.

We now list some facts about the groups $H_{mn}$ which will be proved in Section 2.

1. Let $G_{n}$ denote the matrix group
$\left\{
    \begin{pmatrix}
        a & \mathbf{b} \\
        0 & I_n
    \end{pmatrix}
: \mathbf{b} \in \R^n, a> 0 \right\}.$
Note that $G_{1}$ is isomorphic as a Lie group to $\Aff^+$. We prove in Theorem \ref{Iso} that $H_{mn}$ is isomorphic to $\R^{m-1} \times G_{n}.$ Hence, after identifying the $a$ coordinate in $G_{n}$ with its natural logarithm, multiplication in $H_{mn}$ becomes:
\begin{equation}\label{eq:1}
(v, a, \mathbf{b})(v', a', \mathbf{b'}) = (v + v', a + a', \mathbf{b} + e^{a}\mathbf{b'})
\end{equation}
for $v,v' \in \R^{m-1}, a,a' \in \R, \mathbf{b}, \mathbf{b'} \in \R^{n}.$

2. Let $\h_{mn}$ be the Lie algebra of $H_{mn}$. As shown in Lemma \ref{exp}, the exponential map $\exp$ of $H_{mn}$ is given by
  \[
        \exp(v, a, \mathbf{b}) =
        \begin{cases}
            (v, a, \frac{\mathbf{b}}{a}(e^{a} - 1) )& \text{ if $a \not= 0$},\\
            (v, 0, \mathbf{b}) & \text{ if $a = 0$} .
        \end{cases}
    \]
where $(v, a, \mathbf{b}) \in \h_{mn}$. In particular $\exp$ is a bijection with its inverse analytic everywhere. Thus, $H_{mn}$ is exponential.

We solve the Semigroup Problem for $H_{mn}$ by proving two closely related conjectures. We first introduce some terminology and recall the fundamental result of Lawson.

\begin{definition}
    \label{max}
    A \emph{maximal subsemigroup with nonempty interior} of a topological group
    $G$ is a proper subsemigroup $M$ of $G$ with nonempty interior such that
    $M$ is not a group and the only subsemigroups of $G$ containing $M$ are $G$
    and $M$. In this paper, the term \emph{maximal subsemigroup} will always refer to
    those with nonempty interior.
\end{definition}

We note that maximal subsemigroups in a connected topological group are closed~\cite[Proposition 5.4]{L}.

\begin{definition}
    \label{sep}
    A subset $S$ of a topological group $G$ is called \emph{nonseparated} if it
    is not contained in a maximal subsemigroup.
\end{definition}

\begin{definition}
    \label{good}
    A subset $S$ of a Lie group $G$ is called \emph{multiplicatively quasi-dense} if the closure of the
    semigroup it generates is a group not contained in any connected
    codimension 1 subgroup.
\end{definition}

\begin{definition}
   \label{great}
   A subset $S$ of a topological group $G$ is called \emph{multiplicatively dense} if it generates a dense
   subsemigroup. Additionally, for a fixed positive integer $\ell$, we define $p \in G^{\ell}$ to be a
   \emph{multiplicatively dense $\ell$-tuple} if the subset corresponding to $p$ is multiplicatively dense.
\end{definition}

The following result belongs to Lawson \cite[Theorem 12.5]{L}.

\begin{theorem}\label{thm-lawson01} The maximal subsemigroups $M$ with non-empty interior of a simply connected Lie group $G$ with Lie algebra $L(G)$ and with $G/\text{Rad}\;G$ compact are in one-to-one correspondence with their tangent objects
\[
L(M)=\{x\in L(G):\exp tx\in M, t\ge 0\},
\]
and the latter are precisely the closed half-spaces with boundary a subalgebra. Furthermore, $M$ is the semigroup generated by $\exp(L(M))$.
\end{theorem}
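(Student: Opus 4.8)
The plan is to set up the two natural maps between maximal subsemigroups and half-spaces and show they are mutually inverse, using the language of tangent wedges. To a closed subsemigroup $M$ one assigns its tangent object $L(M)=\{x\in L(G):\exp tx\in M,\ t\ge 0\}$; conversely, to a wedge $W\subseteq L(G)$ one assigns the closed subsemigroup $S_W:=\overline{\langle\exp W\rangle}$. Two standard facts from Lie semigroup theory frame everything: first, for any closed subsemigroup, $L(M)$ is a \emph{Lie wedge}, i.e.\ a closed convex cone invariant under $e^{\operatorname{ad} y}$ for every $y$ in its edge $L(M)\cap -L(M)$; second, when $\operatorname{int}M\neq\emptyset$ the wedge $L(M)$ is \emph{generating} (spans $L(G)$), since $\exp$ carries a full-dimensional neighbourhood of interior rays into $\operatorname{int}M$. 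The theorem then reduces to showing: (i) $L(M)$ is a closed half-space whose boundary is a subalgebra; (ii) $M=S_{L(M)}$; and (iii) every half-space with subalgebra boundary arises this way.

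I would dispatch the ``easy'' direction first. Let $H=\{\lambda\ge 0\}$ be a closed half-space whose boundary hyperplane $\h=\ker\lambda$ is a subalgebra. I claim $H$ is automatically a Lie wedge: for $y\in\h$ the operator $e^{\operatorname{ad} y}$ preserves $\h$ (because $\h$ is a subalgebra, so $[\,y,\h\,]\subseteq\h$) and therefore descends to the one-dimensional quotient $L(G)/\h$, on which it acts by the positive scalar $e^{\operatorname{tr}(\operatorname{ad} y\,|\,L(G)/\h)}$; hence it fixes the chosen side and $e^{\operatorname{ad} y}H=H$. Granting that $H$ is \emph{global} (see below), $S_H$ is a proper closed subsemigroup with $L(S_H)=H$, it is not a group since $H$ is not a subspace, and it is maximal: by a controllability argument, any subsemigroup properly containing $S_H$ has tangent wedge strictly larger than the half-space $H$, necessarily all of $L(G)$, whence it equals $G$.

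The harder direction is showing that a maximal $M$ forces $L(M)$ to be exactly a half-space with subalgebra boundary. Since $M$ is proper, not a group, and has interior, $W:=L(M)$ is a generating Lie wedge that is neither all of $L(G)$ nor a subspace. The crux is a support-and-controllability dichotomy for generating Lie wedges: either $W$ is contained in a half-space $H$ with subalgebra boundary, or else $S_W=G$. Combined with $S_W\subseteq M\subsetneq G$, the second alternative is impossible, so $W\subseteq H$. Maximality of $M$ together with the easy direction (which makes $S_H$ a maximal subsemigroup containing $S_W\subseteq M$) then pins down $M=S_H$ and $W=H$. Injectivity of $M\mapsto L(M)$ follows because $M=S_{L(M)}$ recovers $M$ from its tangent wedge.

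The two places where the hypotheses do real work --- and the main obstacles --- are (a) \emph{globality}: that a half-space with subalgebra boundary is the tangent wedge of an actual subsemigroup, so that $L(S_H)=H$ rather than something larger, and (b) the \emph{support/controllability} statement that a proper generating Lie wedge is supported by a subalgebra hyperplane and that absence of such support yields $S_W=G$. Both rest on the structure theory of simply connected Lie groups with $G/\operatorname{Rad}G$ compact: one handles the solvable radical, where the exponential is well behaved and positivity of the scalars $e^{\operatorname{tr}(\cdots)}$ keeps the relevant functionals monotone along the semigroup, and then propagates the conclusion through the compact part. Making the globality and controllability arguments precise is where the bulk of the work lies; the bijection and the boundary-subalgebra property are comparatively formal once these are in hand.
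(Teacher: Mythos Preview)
The paper does not contain its own proof of this statement: it is quoted verbatim as a result of Lawson and attributed to \cite[Theorem 12.5]{L}, with no argument supplied. So there is nothing in the paper to compare your proposal against; the authors simply use this theorem as a black box.

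As a standalone sketch your outline follows the standard Lie semigroup route and correctly isolates the two nontrivial ingredients --- globality of half-space wedges with subalgebra boundary, and the support/controllability dichotomy for generating Lie wedges --- as the points where the hypotheses on $G$ (simply connected, $G/\operatorname{Rad}G$ compact) are actually needed. That said, what you have written is a plan rather than a proof: both (a) and (b) are deep results in Lawson's paper (and in the surrounding Hilgert--Hofmann--Lawson theory), and your final paragraph essentially acknowledges that you have not carried them out. If the intent was to reproduce Lawson's theorem, the real content lies precisely in establishing those two facts, which your proposal defers; if the intent was merely to justify citing it, then a citation is exactly what the paper does.
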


\begin{definition}
    \label{border subgroups}
    For groups $G$ that satisfy Theorem~\ref{thm-lawson01}, we call the boundaries
    of their maximal subsemigroups \emph{border
    subgroups}.
\end{definition}

We now state two general semigroup conjectures for Lie groups, that we will solve for $G=H_{mn}$.

\textbf{Semigroup Conjecture 1:} Let $G$ be a Lie group. Then, $S
\subseteq G$ is multiplicatively quasi-dense if and only if it is not contained in a maximal subsemigroup.

\textbf{Semigroup Conjecture 2:} Let $G$ be a Lie group. Then, for each
positive integer $\ell$, the set of multiplicatively dense $\ell$-tuples in $G^{\ell}$ is dense in the set of
nonseparated $\ell$-tuples in $G^{\ell}$.

The first conjecture says that if $S \subseteq H_{mn}$ is not contained in a maximal subsemigroup, then the closed subsemigroup it generates is the same as the closed subgroup it generates. The second conjecture says that for a typical such subset, $H_{mn}$ is the closed subgroup generated by the subset. Together they imply a positive solution for the Semigroup Problem for $H_{mn}$.

The rest of the paper is organized as follows. In Section~\ref{s:main} we present the main results. We first prove the general facts about the structure of $H_{mn}$ listed in the introduction and use them to prove some parts of the Semigroup Conjecture 1. Then we describe certain Lie group automorphisms of $H_{mn}$. Next, we prove an exact sequence lemma which is used to prove the conjectures inductively. Then we prove some properties of nonseparated subsemigroups and some properties of products in $H_{mn}$ and use them to finish the proof of Semigroup Conjecture 1. Finally, we use a theorem of Kronecker and a structure lemma about multiplicatively quasi-dense subsets in $H_{mn}$ to prove Semigroup Conjecture 2. In Section~\ref{s:examples} we show several examples of nonseparated subsemigroups that are not groups, further validating our main results.

A spinoff of our work is a procedure to construct examples of dense subsemigroups in a large class of closed subgroups of $H_{mn}$ or $G_n$. One starts with a nonseparated subset $F$ in $H_{mn}$ that generates a closed subgroup $G\subseteq H_{mn}$. If the subsemigroup $S$ generated by $F$ is not a group (this hypothesis is easy to check for the simple examples presented in Section 3), then $S$ is a dense subsemigroup in $G$. In particular, one can consider the case $G=H_{mn}$ or $G=G_n$ and construct examples of dense semigroups.

To complete the picture, in Section~\ref{s:generators} we deduce the minimal numbers of generators as a closed semigroup ($n+2$ and $\max\{m,n+2\}$) and as a closed subgroup ($n+1$ and $\max\{m,n+1\}$) for $G_n$ and $H_{mn}$ respectively. Finding these numbers for various Lie groups is a question of current research interest that was mostly overlooked in the literature. See~\cite{J},~\cite{AV} for related results in this direction.

\section{\bf Main results}\label{s:main}
We start this section by proving the structure theorem stated in the introduction.

\begin{theorem}
    \label{Iso}
    The group $H_{mn}=\R^{m} \ltimes_{\phi} \R^{n}$ is isomorphic as a Lie group to $\R^{m -1}\times G_{n}.$
\end{theorem}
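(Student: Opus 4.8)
The plan is to make the structure homomorphism $\phi$ explicit and then absorb it into a linear change of coordinates on the $\R^m$ factor. Since $\R^m$ is connected, any continuous homomorphism into $\Aut(\R^n)$ with central image must land in the identity component of the center, that is, in the positive scalar matrices $\{\lambda I_n : \lambda>0\}$, which is isomorphic to $(\R_{>0},\cdot)$ via $\lambda I_n \mapsto \lambda$. First I would compose $\phi$ with the isomorphism $\log:(\R_{>0},\cdot)\to(\R,+)$ to obtain a continuous additive homomorphism $\R^m\to\R$; such a map is automatically $\R$-linear, so it equals a linear functional $\alpha$, and $\phi(v)=e^{\alpha(v)}I_n$. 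Because $\phi$ is assumed nontrivial, $\alpha\neq 0$.

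Next I would exploit that $\alpha$ is a nonzero, hence surjective, functional on $\R^m$: its kernel is an $(m-1)$-dimensional subspace, and choosing any $w$ with $\alpha(w)=1$ yields a direct sum decomposition $\R^m=\ker\alpha\oplus\R w$. Writing $v=(v_0,a)$ in these coordinates, with $v_0\in\ker\alpha\cong\R^{m-1}$ and $a=\alpha(v)$, the semidirect product multiplication $(v,\mathbf b)(v',\mathbf b')=(v+v',\,\mathbf b+\phi(v)\mathbf b')$ becomes $((v_0,a),\mathbf b)((v_0',a'),\mathbf b')=((v_0+v_0',a+a'),\,\mathbf b+e^{a}\mathbf b')$, which is exactly the law \eqref{eq:1}.

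I would then define $\Psi:H_{mn}\to\R^{m-1}\times G_n$ by $\Psi(v,\mathbf b)=(v_0,(a,\mathbf b))$, where $(v_0,a)$ are the coordinates of $v$ above and the $a$-coordinate of $G_n$ is read logarithmically as in the introduction. The displayed product formula shows directly that $\Psi$ is a group homomorphism: the $v_0$-component adds in the abelian factor $\R^{m-1}$, while the $(a,\mathbf b)$-component multiplies according to the defining law of $G_n$. Since $v\mapsto(v_0,a)$ is a linear isomorphism and $\Psi$ is the identity on the $\mathbf b$-coordinate, $\Psi$ is a bijection; both $\Psi$ and $\Psi^{-1}$ are built from linear maps and the exponential and logarithm, hence analytic, so $\Psi$ is an isomorphism of Lie groups.

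The only genuinely substantive point is the first step, the reduction of $\phi$ to a single scalar exponential $e^{\alpha(v)}$. This is where the centrality hypothesis is essential: it guarantees that the action of each $v$ is a scalar, so that the $(m-1)$ directions in $\ker\alpha$ act trivially and split off cleanly as a direct factor. Everything after that is a routine verification that a linear change of variables converts the semidirect product into the stated direct product, and no analytic difficulty remains.
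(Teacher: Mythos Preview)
Your proof is correct and follows essentially the same route as the paper: identify $\phi$ with a linear functional $\alpha$ on $\R^m$ via the logarithm, split $\R^m$ as $\ker\alpha\oplus\R w$, and verify that the resulting coordinate map is a Lie group isomorphism onto $\R^{m-1}\times G_n$. Your normalization $\alpha(w)=1$ is slightly cleaner than the paper's choice of an arbitrary $h\in H$ (which then carries along a factor of $\ln r$), but the argument is otherwise the same.
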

\begin{proof}
Note that we assumed that $\phi$ maps continuously into the group of positive scalar matrices, which is isomorphic to $\R$. Any continuous additive homomorphism $\phi$ from $\R^{m}$ to $\R$ is  $\Q$-linear and hence $\R$-linear. Thus, $\phi$ is a linear map from $\R^{m}$ to $\R$ and hence $\R^{m} \cong \ker(\phi) \oplus H$, as a Lie group (with $H \cong \R$ a subspace in $\R^{m}$). We denote $\ker(\phi)$ by $K$. Now, let $h$ be a nonzero element in $H$, so that $H = \R h$, and let $r.I_{n} = \phi(h)$, with $r \in \R^{+}$. We then define a map
$$\Phi: (K \oplus H) \ltimes \R^{n} \rightarrow \R^{m-1} \times G_{n}$$
which sends $(v, xh, \mathbf{b}) \mapsto (v, x \ln r, \mathbf{b}).$ This map is an isomorphism of smooth manifolds, hence we simply need to verify that it is a group homomorphism. This follows from the following computation:
\begin{align*}
\Phi((v, xh, \mathbf{b})(v', x'h, \mathbf{b'})) &= \Phi(v + v', (x + x')h, \mathbf{b} + e^{x}r\mathbf{b'})\\
&= (v + v', (x + x')\ln r, \mathbf{b} + e^{x}r \mathbf{b'})\\
&= (v, x\ln r, \mathbf{b})(v', x'\ln r, \mathbf{b'})\\
&= \Phi(v, xh, \mathbf{b})\Phi(v', x'h, \mathbf{b'}).
\end{align*}
\end{proof}

From now on we assume $H_{mn} = \R^{m-1} \times G_{n}$ with the product given by~\eqref{eq:1}.
It follows from Theorem~\ref{Iso} that the groups
$H_{mn}$ are solvable. Additionally, as $H_{mn}$ is diffeomorphic to $\R^{m+n}$, it has to be
simply connected. Hence, $H_{mn}$ satisfies the hypothesis of \cite[Theorem
12.5]{L}.

\begin{lemma}
    \label{exp}
    The exponential map $\exp:\h_{mn} \rightarrow H_{mn}$ is an analytic bijection
    with analytic inverse.
\end{lemma}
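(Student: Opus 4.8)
The plan is to prove Lemma~\ref{exp} by computing the exponential map explicitly and then observing that the resulting formula admits an analytic inverse. First I would identify the Lie algebra $\h_{mn}$ and the Lie bracket. Since $H_{mn} \cong \R^{m-1} \times G_n$, the abelian factor $\R^{m-1}$ contributes nothing to the bracket, so I would work out the structure of the Lie algebra $\g_n$ of $G_n$. Using the matrix realization of $G_n$ from the introduction, I would compute brackets of the generating matrices: writing a typical algebra element as $(a, \mathbf{b})$ corresponding to $\begin{pmatrix} a & \mathbf{b} \\ 0 & 0 \end{pmatrix}$, the only nontrivial brackets come from $[a\text{-direction}, \mathbf{b}\text{-direction}]$, giving $[(a,\mathbf{b}),(a',\mathbf{b}')] = (0, a\mathbf{b}' - a'\mathbf{b})$. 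This is a very small two-step nilpotent-like bracket (in fact the algebra of $\Aff^+$ when $n=1$), so the computation should be routine.

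Next I would compute $\exp$ directly. The cleanest route is to exponentiate the matrix $X = \begin{pmatrix} a & \mathbf{b} \\ 0 & 0 \end{pmatrix}$, since for $G_n$ the group exponential agrees with the matrix exponential under the given embedding. Because $X^k = \begin{pmatrix} a^k & a^{k-1}\mathbf{b} \\ 0 & 0 \end{pmatrix}$ for $k \geq 1$, the matrix exponential sums as $e^X = I + \sum_{k\ge 1} \frac{1}{k!} X^k$, which yields the $a$-entry $e^a$ and the $\mathbf{b}$-block $\bigl(\sum_{k\ge 1} \frac{a^{k-1}}{k!}\bigr)\mathbf{b} = \frac{e^a - 1}{a}\mathbf{b}$ when $a \neq 0$. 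Passing back to the logarithmic $a$-coordinate used in~\eqref{eq:1}, and handling $a = 0$ as the limiting case where the scalar factor $\frac{e^a-1}{a} \to 1$, I recover exactly the stated piecewise formula; the $v$-coordinate exponentiates trivially by the product structure. This confirms the displayed expression for $\exp$.

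Having the explicit formula, I would verify bijectivity and analyticity of the inverse. The scalar $g(a) := \frac{e^a - 1}{a}$ (with $g(0) := 1$) is real-analytic and strictly positive everywhere on $\R$ — it is a ratio of entire functions with a removable singularity at $0$, and positivity follows since $e^a - 1$ and $a$ always share sign. Given an image point $(v, a, \mathbf{c})$, one recovers $v$ and $a$ directly, and then $\mathbf{b} = \mathbf{c}/g(a)$, which is uniquely determined; this shows $\exp$ is a bijection. Analyticity of the inverse then reduces to analyticity of $a \mapsto 1/g(a)$, which holds because $g$ is analytic and nowhere zero.

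The only genuinely delicate point — the main obstacle — is the behavior at $a = 0$, where the formula changes cases and one must check that both $\exp$ and its inverse are analytic across this locus rather than merely continuous. I would address this by treating $g(a) = \frac{e^a-1}{a}$ not as a case split but as a single entire function $g(a) = \sum_{k\ge 0} \frac{a^k}{(k+1)!}$ with $g(0) = 1$; since $g$ is analytic and nonvanishing on all of $\R$, both $g$ and $1/g$ are analytic everywhere, so no special argument at $a = 0$ is needed once the piecewise formula is recognized as the restriction of a single analytic expression. This observation dissolves the apparent difficulty and yields analyticity of $\exp$ and of $\exp^{-1}$ on the whole of $\h_{mn}$.
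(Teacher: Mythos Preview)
Your proposal is correct and follows essentially the same approach as the paper: both reduce to $G_n$ via the product decomposition, compute $\exp$ explicitly through the matrix powers $X^k = \begin{pmatrix} a^k & a^{k-1}\mathbf{b} \\ 0 & 0 \end{pmatrix}$, write down the inverse explicitly, and handle the apparent singularity at $a=0$ by observing that $(e^a-1)/a$ extends analytically there. Your treatment of the $a=0$ point via the power series $g(a)=\sum_{k\ge 0} a^k/(k+1)!$ is arguably cleaner than the paper's appeal to ``standard complex analysis results'' about order-one zeros, but the underlying idea is identical.
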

\begin{proof}
    Note that $\exp$ is always analytic. We show that it is bijective. Since
    $H_{mn} \cong \R^{m-1} \times G_{n}$, it will suffice to show that the restriction of $\exp$ to $G_{n}$
    is bijective with analytic inverse. Suppose $\g_{n}$ is the Lie algebra of
    $G_{n}$.
    Then, $\g_{n}$ consists of elements $(a,\mathbf{b})$, $a\in \R, \mathbf{b} \in \R^{n}$, with multiplication $(a,
    \mathbf{b})(a', \mathbf{b'}) = (aa', a\mathbf{b'}).$ Thus $(a, \mathbf{b})^\ell = (a^{\ell}, a^{\ell-1}\mathbf{b}).$
    A calculation shows:
    \[
        \exp(a, \mathbf{b}) =
        \begin{cases}
            (a, (e^{a} - 1)\frac{\mathbf{b}}{a} )& \text{ if $a \not= 0$},\\
            (0, \mathbf{b}) & \text{ if $a = 0$},
        \end{cases}
    \]
    and $\exp$ is clearly bijective.  Consider now $\log = \exp^{-1}$,
    \[
        \log( x, \mathbf{y}) =
        \begin{cases}
            (x, \frac{x}{e^{x} - 1}\mathbf{y}) & \text{ if $x \not= 0$},\\
            (0, \mathbf{y}) & \text{ if $x = 0$}.
        \end{cases}
    \]
    which is clearly analytic for $x \neq 0$. For $x = 0$, the Taylor series
    of $x$ and $e^{x} - 1$ have zero constant terms and non-zero $x$ terms. Thus,
    the denominator in formula above is a zero of order 1 and by standard complex analysis results
    it follows that the fraction is analytic. We conclude that $\log$ is analytic
    everywhere.
\end{proof}

Lemma~\ref{exp} and~\cite[Theorem 12.5]{L} imply that the border groups in $H_{mn}$ are
connected codimension 1 subgroups, images of hyperplane subalgebras of $\h_{mn}$.
Let $H$ be a connected codimension 1 subgroup of $H_{mn}$, and  $\h$ its is a
hyperplane Lie subalgebra. If $S\subseteq H$, then $S$ is contained
in the maximal semigroups given by exponentiating each of the half-spaces
determined by $\h$. This gives the following theorem.

\begin{theorem}
    \label{s1rev1}
    Let $S \subseteq H_{mn}$ be nonseparated. Then, $S$ is not contained in any
    connected codimension 1 subgroup of $H_{mn}$.
\end{theorem}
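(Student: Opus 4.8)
The plan is to prove the contrapositive: if $S$ is contained in a connected codimension~1 subgroup of $H_{mn}$, then $S$ is separated, i.e.\ it lies in some maximal subsemigroup. This is exactly the direction the paragraph preceding the theorem sets up, so the task is to make that sketch precise.

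First I would pass from the subgroup to its Lie algebra. Let $H$ be a connected codimension~1 subgroup with $S \subseteq H$, and set $\h = \Lie(H) \subseteq \h_{mn}$. Since $H$ is a subgroup, $\h$ is a subalgebra, and since $H$ has codimension~1, $\h$ is a hyperplane. A hyperplane bounds exactly two closed half-spaces $W_+, W_-$ in $\h_{mn}$, and both satisfy $\partial W_\pm = \h$, which is a subalgebra. Hence each $W_\pm$ meets the criterion of Theorem~\ref{thm-lawson01}: it is the tangent object $L(M_\pm)$ of a maximal subsemigroup $M_\pm$ with nonempty interior, and $M_\pm$ is the semigroup generated by $\exp(W_\pm)$.

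Next I would show $H \subseteq M_+$. Because $\h \subseteq W_+$, we have $\exp(\h) \subseteq \exp(W_+) \subseteq M_+$. The crucial observation is that $\h$ is a vector subspace, so $\exp(\h)$ is symmetric under inversion: for $x \in \h$ we have $(\exp x)^{-1} = \exp(-x)$ with $-x \in \h$. Consequently the subsemigroup generated by $\exp(\h)$ is closed under inversion and is therefore a subgroup; and since $\exp$ is a local diffeomorphism at the origin (by Lemma~\ref{exp}), $\exp(\h)$ contains a neighborhood of the identity in $H$, which generates the whole connected group $H$. Thus the semigroup generated by $\exp(\h)$ is exactly $H$, and as this semigroup lies inside $M_+$, we obtain $H \subseteq M_+$. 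Therefore $S \subseteq H \subseteq M_+$, so $S$ is separated, which is the desired contrapositive.

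The argument is short and I do not anticipate a serious obstacle; the one point that genuinely deserves care is the passage from ``$\exp(\h) \subseteq M_+$'' to ``$H \subseteq M_+$''. A maximal subsemigroup need not contain the group generated by an arbitrary subset of it, so one must use that $\h$ is a linear subspace (whence $\exp(\h)$ is symmetric) to upgrade the generated semigroup to the full subgroup $H$. The remaining ingredients---that $\Lie(H)$ is a subalgebra, that a codimension~1 subalgebra bounds two admissible half-spaces, and that $\exp$ covers a neighborhood of the identity---are standard and are supplied by Lemma~\ref{exp} and Theorem~\ref{thm-lawson01}.
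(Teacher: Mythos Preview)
Your proof is correct and follows essentially the same route as the paper: prove the contrapositive by observing that the hyperplane subalgebra $\h$ bounds a half-space which, via Theorem~\ref{thm-lawson01}, is the tangent object of a maximal subsemigroup $M_+$, and then argue $S \subseteq H \subseteq M_+$. The paper's version is the terse paragraph immediately preceding the theorem; you have simply expanded the step $H \subseteq M_+$ that the paper asserts without detail. One minor simplification: since Lemma~\ref{exp} gives that $\exp$ is a global analytic diffeomorphism, connected subgroups of $H_{mn}$ correspond bijectively to subalgebras and in fact $H = \exp(\h)$ outright, so the detour through ``the semigroup generated by $\exp(\h)$'' is not strictly needed---but your argument via symmetry of $\exp(\h)$ is valid and self-contained.
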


We show the forward implication of Semigroup Conjecture 1.

\begin{theorem}
    \label{s1fwd}
    Let $S \subseteq H_{mn}$ be multiplicatively quasi-dense. Then, $S$ is nonseparated.
\end{theorem}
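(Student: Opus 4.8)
The plan is to argue by contradiction and to reduce the statement to a structural fact about the group of units of a maximal subsemigroup. Suppose $S$ is multiplicatively quasi-dense; we must show it is nonseparated. Arguing by contradiction, assume $S$ is contained in some maximal subsemigroup $M$. Let $\langle S\rangle$ denote the subsemigroup generated by $S$ and put $G' := \overline{\langle S\rangle}$. Since $M$ is a semigroup containing $S$, it contains $\langle S\rangle$, and since maximal subsemigroups of a connected group are closed (as noted after Definition~\ref{max}), we get $G' \subseteq M$. By Definition~\ref{good}, quasi-density forces $G'$ to be a group, so every $g \in G'$ satisfies $g, g^{-1} \in G' \subseteq M$, whence $G' \subseteq M \cap M^{-1}$. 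The goal is therefore to show that $M \cap M^{-1}$ is contained in the border subgroup $\partial M$, which by the discussion preceding Theorem~\ref{s1rev1} (Lemma~\ref{exp} together with \cite[Theorem 12.5]{L}) is a connected codimension 1 subgroup. This would place $G'$ inside a connected codimension 1 subgroup, contradicting quasi-density and finishing the proof.

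The heart of the argument is the inclusion $M \cap M^{-1} \subseteq \partial M$, equivalently $M \cap M^{-1} \cap \operatorname{int}(M) = \emptyset$. First I record that the interior absorbs $M$ on both sides: writing $U := \operatorname{int}(M)$ and taking $m \in M$, left and right translation are homeomorphisms, so $mU$ and $Um$ are open subsets of $MM \subseteq M$, whence $UM \subseteq U$ and $MU \subseteq U$. Now suppose, for contradiction, that some $g$ lies in $M \cap M^{-1} \cap \operatorname{int}(M)$. Then $g^{-1} \in M$ and $g \in U$, so $e = g^{-1}g \in MU \subseteq U$; that is, the identity lies in the interior of $M$.

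It remains to derive a contradiction from $e \in \operatorname{int}(M)$. Choosing an open neighborhood $V$ of $e$ with $V \subseteq M$ and replacing $V$ by $V \cap V^{-1}$, we may assume $V = V^{-1}$ and $V \subseteq M$. The subsemigroup generated by a symmetric set is closed under inversion, hence is a subgroup; it is open (it contains $V$) and therefore closed, so by connectedness of $H_{mn}$ it equals all of $H_{mn}$. Since this subsemigroup sits inside $M$, we conclude $M = H_{mn}$, contradicting that $M$ is a proper subsemigroup. Thus $e \notin \operatorname{int}(M)$, giving $M \cap M^{-1} \subseteq \partial M$ as required.

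I expect the main obstacle to be the implication $e \in \operatorname{int}(M) \Rightarrow M = H_{mn}$: this is where the group structure (that a symmetric neighborhood of the identity generates the whole connected group) and the topology (that an interior point at the identity forces the semigroup to swallow that generated subgroup) must be combined carefully. Everything else — absorption by the interior, and the identification of $\partial M$ as a connected codimension 1 subgroup — is either formal or already available from Lawson's theorem.
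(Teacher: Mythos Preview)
Your proof is correct, but it follows a somewhat different route from the paper. Both arguments begin the same way: assume $S$ is multiplicatively quasi-dense yet contained in a maximal subsemigroup $M$, use that $M$ is closed to get $G' := \overline{\langle S\rangle} \subseteq M$, and note that quasi-density forces $G'$ to be a group not contained in the border subgroup $\partial M$. The divergence is in how one then produces an element of $G'$ lying outside $M$ (equivalently, why $G' \subseteq M$ forces $G' \subseteq \partial M$). The paper exploits the Lie-theoretic structure directly: it picks $x \in S \cap \operatorname{int}(M)$, observes that $x^{-1} \in G'$, and uses the half-space description of $M$ from Lawson's theorem together with the bijective $\exp$ to see that $\log(x^{-1}) = -\log(x)$ lies in the opposite open half-space, so $x^{-1} \notin M$. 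You instead prove the purely topological fact $M \cap M^{-1} \subseteq \partial M$, via the standard absorption $M\cdot\operatorname{int}(M)\subseteq\operatorname{int}(M)$ and the observation that $e\in\operatorname{int}(M)$ would force $M = H_{mn}$ by connectedness. Your argument is more elementary and portable---it works in any connected topological group with closed maximal subsemigroups, without invoking the exponential map or Lawson's half-space picture---while the paper's argument is shorter once one has that structure in hand.
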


\begin{proof} If $M\subseteq H_{mn}$ is a maximal subsemigroup and $S$ is multiplicatively quasi-dense, then $S$ is not contained in $\bd(M)$,
    the boundary of $M$. Now, suppose for contradiction that $S$ is multiplicatively quasi-dense but $S$ is separated. Then, $S$ is contained in some maximal
    subsemigroup $M$ of $H_{mn}$ but not in $\text{Bd}(M)$. Hence, there is some $x \in S$ which is in the interior of $M$. Now, if $U$ is the subsemigroup generated by $S$, then as $S$ is multiplicatively quasi-dense, $x^{-1} \in \overline{U}$. But $\log(x), \log(x^{-1})$ are negatives of each other and are hence on opposite sides of $\log(\text{Bd}(M)).$ Hence, $\overline{U}$ is not contained in $M$, a contradiction of the fact that $M$ is closed. Thus, $S$ is nonseparated.
\end{proof}

We describe the (Lie group) automorphisms of $H_{mn}=\R^{m-1} \times G_{n}$.
They will play an important
role in the proof of the conjectures as the properties of a subset being
nonseparated, multiplicatively quasi-dense or multiplicatively dense are preserved by automorphisms.
As $H_{mn}$ is simply
connected, one can consider the Lie algebra automorphisms.
Let $\{X_1,\dots,X_{m-1}\}$ be the canonical basis in $\R^{m-1}$, $Y$ the canonical basis in $\R$ and
$\{Z_1,\dots,Z_{n}\}$ the canonical basis in $\R^{n}$.
Then a basis for $\h_{mn}$ is given by the set $\{X_{1}, \ldots, X_{m-1}, Y, Z_{1}, \ldots, Z_{n}\}$. The commutation relations are $$[X_{i}, X_{j}] = [X_{i}, Y] =
[X_{i}, Z_{j}] = [Z_{i}, Z_{j}] = 0, [Y, Z_{i}] = Z_{i}.$$ Thus, we immediately
get the following automorphisms:
\begin{itemize}
\item Type A: Any automorphism $\psi$ of $\R^{m-1}$ extends to an
automorphism that is the identity on $G_{n}$.
\item Type B: Any automorphism $\psi$ of $\R^{n}$ extends to an automorphism
which is the identity on the $X_i$ and $Y$ coordinates.
\item Type C: The map that fixes $X_{i}, Z_{j}$ and sends $Y \mapsto Y +
\sum_{i}\alpha_{i}X_{i} + \sum_{j} \beta_{j}Z_{j}$ is a Lie algebra
automorphism. Hence, for a given $(v, a, \mathbf{b}) \in H_{mn}$ there is a Lie
group automorphism that sends it to $(0, a, \mathbf{0})$.
\end{itemize}

We are ready to begin proving the other implication in the Semigroup Conjecture 1 for $H_{mn}$. Our
approach is by induction. We rely heavily on the following Lemma, which
is proved in the setting of a general topological group. If $G$ is a group and $X\subseteq G$, we denote by $U(X)$ the semigroup generated by $X$.
When the subset $X$ is understood from the context, we frequently abbreviate $U(X)$ by $U$.
\begin{lemma}
    \label{exact}
   Consider an exact sequence of topological groups:
    \[
        \begin{tikzpicture}
\matrix (m) [matrix of math nodes,row sep=1em,column sep=1.5em,minimum width=2em]
{ 0 & B & G & A & 0.\\};
\path[-stealth]
(m-1-1) edge (m-1-2)
(m-1-2) edge(m-1-3)
(m-1-3) edge node[auto] {$\pi$}(m-1-4)
(m-1-4) edge (m-1-5);
\end{tikzpicture}
\]
Let $S \subseteq G$ and let $U$ be the semigroup generated by $S$. Then:
\begin{enumerate}
    \item
        $\overline{\pi(\overline{U})} = \overline{\pi(U)}$.
    \item
        If $\overline{U}$ is a group, then so is $\overline{U(\pi(S))}$.
    \item
        If $S$ is multiplicatively dense in $G$, then $\pi(S)$ is multiplicatively dense in $A$.
    \item
        If ${\pi(\overline U)}$ is a group and
        $\overline{U} \cap B$ is a group, then $\overline{U}$ is a group.
    \item
        If $\pi(S)$ is multiplicatively dense in $G$, $\pi(\overline{U})$ is closed in $A$ and
        $\overline{U} \cap B$ is multiplicatively dense in $B$, then $S$ is multiplicatively dense.
    \item
        If $S$ is nonseparated, then so is $\pi(S)$.
    \item
        Let $B = \R^{n}$ and $G = \R^{n} \rtimes A$, where $A$ second countable group, and
        suppose $\overline{U} \cap B$ is multiplicatively quasi-dense in $A$. Then $\pi(\overline{U})$ is
        closed in $A$.
\end{enumerate}
\end{lemma}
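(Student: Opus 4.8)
The plan is to treat parts (1)--(6) as formal consequences of $\pi$ being a continuous open homomorphism with kernel $B$, and to reserve the real work for part (7). For (1), $U\subseteq\overline U$ gives $\overline{\pi(U)}\subseteq\overline{\pi(\overline U)}$, while continuity of $\pi$ gives $\pi(\overline U)\subseteq\overline{\pi(U)}$ and hence the reverse inclusion. Since $\pi$ is a homomorphism we have $U(\pi(S))=\pi(U)$, so $\overline{U(\pi(S))}=\overline{\pi(\overline U)}$ by (1); as a homomorphic image of a group is a group and the closure of a subgroup is a subgroup, this settles (2), and (3) is just the case $\overline U=G$, for which $\overline{U(\pi(S))}=\overline{\pi(G)}=A$.

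Part (4) I would prove with the standard extension (``two-out-of-three'') argument: given $u\in\overline U$, use that $\pi(\overline U)$ is a group to pick $w\in\overline U$ with $\pi(w)=\pi(u)^{-1}$; then $uw\in\ker\pi\cap\overline U=\overline U\cap B$, which is a group, so $(uw)^{-1}\in\overline U\cap B\subseteq\overline U$ and therefore $u^{-1}=w(uw)^{-1}\in\overline U$ because $\overline U$ is a semigroup. Part (5) recombines these ideas: density of $\pi(S)$ together with $\pi(\overline U)$ closed forces $\pi(\overline U)=\overline{\pi(\overline U)}=A$ via (1), while $\overline U\cap B$ dense in the closed subsemigroup $B=\R^n$ forces $\overline U\cap B=B$; then any $g\in G$ can be written $g=ub$ with $u\in\overline U$ chosen so $\pi(u)=\pi(g)$ and $b=u^{-1}g\in B\subseteq\overline U$, giving $\overline U=G$. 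For (6) I would show that if $\pi(S)$ lies in a maximal subsemigroup $M$ of $A$ then $\pi^{-1}(M)$ is a maximal subsemigroup of $G$ containing $S$: it is proper with nonempty interior (as $\pi$ is open), it is not a group (else $M=\pi(\pi^{-1}(M))$ would be), and it is maximal because any larger subsemigroup $N$ contains $B=\ker\pi$ (as $e_A\in M$), hence satisfies $N=\pi^{-1}(\pi(N))$ with $\pi(N)\in\{M,A\}$ by maximality of $M$.

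The heart of the lemma is (7), and this is where I expect the real difficulty. The key point is that the hypothesis pins down the structure of $V:=\overline U\cap B$: it is a closed subsemigroup of $B=\R^n$ whose generated semigroup has group closure lying in no hyperplane, which forces $V$ to be a closed subgroup spanning $\R^n$. By the classification of closed subgroups of $\R^n$, $V$ is then a subspace plus a full-rank lattice in a complement, so it is cocompact: there is a compact $F\subseteq\R^n$ with $F+V=\R^n$. Now take $a\in\overline{\pi(\overline U)}=\overline{\pi(U)}$; using second countability of $A$, choose $u_k\in U$ with $\pi(u_k)\to a$ and write $u_k=(b_k,a_k)$ in coordinates $G=\R^n\rtimes A$. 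Since $V\subseteq\overline U$ and $\overline U$ is a semigroup, left-translating $u_k$ by a suitable $v_k\in V$ keeps the product in $\overline U$, leaves the $A$-component $a_k$ fixed, and (because $V$ is a group, so $-v\in V$) moves $b_k$ into $F$. Passing to a subsequence along which $b_k+v_k$ converges in the compact set $F$, the translated elements converge in $G$ to some $(b^*,a)$, which lies in $\overline U$ by closedness; hence $a\in\pi(\overline U)$, so $\pi(\overline U)\supseteq\overline{\pi(\overline U)}$ is closed. The main obstacle is exactly this extraction of a limit: without the cocompactness supplied by the quasi-density hypothesis the $b_k$ could run off to infinity, and it is the \emph{group} (not merely semigroup) structure of $V$ that lets me translate every coset representative back into one fixed compact set.
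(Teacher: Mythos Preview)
Your proof is correct and follows essentially the same route as the paper. The only cosmetic differences are: in (6) you argue maximality of $\pi^{-1}(M)$ directly (implicitly using $e_A\in M$, which holds since $M\cup\{e_A\}$ would otherwise be a strictly larger subsemigroup), whereas the paper simply cites Lawson's Lemma~3.12; and in (7) you phrase the translation step via cocompactness of $V$ in $\R^n$, while the paper picks $n$ linearly independent vectors $z_1,\dots,z_n\in V$ and translates by integer combinations to land in a bounded parallelepiped---the same compactness-and-subsequence argument in different clothing.
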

\begin{proof}
    ~\begin{enumerate}
        \item
            Note that $\pi(U) = U(\pi(S))$. Clearly
            $\overline{\pi(\overline{U})} \supseteq \overline{\pi(U)}$. For the
            reverse, pick $x \in \overline{U} \backslash U$. Let $y = \pi(x)$.
            Let $V$ be a neighborhood of $y$. Then, $\pi^{-1}(V)$ is a
            neighborhood of $x$, which hence contains an element of $U$. Thus,
            $V$ contains an element of $\pi(U)$. Hence, $\pi(\overline{U}) \subseteq
            \overline{\pi(U)}$ and thus $\overline{(\pi(\overline{U}))} = \overline{\pi(U)}.$

        \item
            Note that $\overline{U(\pi(S))} = \overline{\pi(U)} =
            \overline{\pi(\overline{U})}$, which is a group if $\overline{U}$
            is a group.

        \item
            If $\overline{U} = G$, then
            $\overline{U(\pi(S))} = \overline{\pi(U)}=A$, as by (1) it contains $\pi(\overline{U}).$

        \item
            In order for $\overline{U}$ to be a group, every element in $\overline{U}$
            must have an inverse in $\overline{U}$. Pick $x \in \overline{U}$. Then, there
            exists an inverse in $\pi(\overline{U})$ for $\pi(x)$. Thus, there
            exists $y \in \overline{U}$ such that $yx \in \overline{U} \cap B$.
            Thus, $x^{-1}y^{-1} \in \overline{U} \cap B$. Thus, $x^{-1} =
            x^{-1}y^{-1}y \in \overline{U}$.

        \item
            Since $\pi(\overline{U})$ is closed, and $\pi(S)$ is multiplicatively dense,
            $\pi(\overline{U})=A$. Additionally, as $\overline{U} \cap
            B$ is multiplicatively dense in $B$, it must be all of $B$, as it is a closed semigroup.
            As $\overline{U}$ contains $B$ and a representative for each
            coset in $A$, $\overline{U} = G$.

        \item
            Let $M$ be a maximal semigroup of $A$.
            Then, by \cite[Lemma 3.12]{L}, $\pi^{-1}(M)$ is a maximal semigroup
            of $G$ and it has nonempty interior by the continuity of
            $\pi$. As $S$ is nonseparated, $S$ is not contained in
            $\pi^{-1}(M)$. Thus, $\pi(S)$ is not contained in $M$. Hence,
            $\pi(S)$ is nonseparated.

        \item
           Let $y$ be a limit point of $\pi(\overline{U})$. By the second countability of $A$ and hence of $G$, we may choose a sequence $\{(x_{i}, y_{i})\} \subseteq \overline{U}$ such that $y_{i} \rightarrow y.$ As $\overline{U} \cap B$ is multiplicatively quasi-dense in $B$, it is not contained in a hyperplane in $B$ and it is a group. Thus, it contains $n$ linearly independent vectors $\{z_{1}, \ldots, z_{n}\}$ in $B$ together with their negatives. As left multiplication by elements of $B$ does not change $y_i$, by multiplying each of the $(x_{i}, y_{i})$ on the left by suitable multiples of $z_{j}$ or $-z_{j}$, we can construct a new sequence $\{(x'_{i}, y_{i})\} \subseteq \overline{U}$ such that $\{x'_{i}\}$ is bounded. Choosing a convergent subsequence of $\{x'_{i}\}$ and taking the limit gives $(x, y) \in \overline{U}$. Hence, $\pi(\overline{U})$ is closed.
    \end{enumerate}
\end{proof}

Now we are going to analyze the border subgroups in $G_{1}$.

\begin{lemma}
    \label{G_1 borders}
    Identifying $G_{1}$ with its Lie algebra $\g_{1}=\R^2$, the border
    subgroups are the (boundary) curves $y = l(e^{x} - 1), (x,y)\in \g_1,$ where $l \in \R$,
    and $x = 0$. We call $l$ the slope of the boundary curve. Moreover,
    \begin{enumerate}
        \item
            Every nonzero point in $\R^{2}$ belongs to a unique boundary curve.
        \item
            For $l\ge 0$, the boundary curve is contained in the first and
            third quadrants. For $l\le 0$, the boundary curve is contained
            in the second and fourth quadrants.
        \item
            Let $l \le 0$. For points $z$ in the fourth quadrant, if
            $l_{z}$ is the slope of the boundary curve through $z$, then $l_{z}
            < l$ if and only if $z$ is below the boundary curve of slope $l$. For
            $z$ in the second quadrant, $z$ is below the boundary curve
            of slope $l$ if and only if $l_{z} > l.$
    \end{enumerate}
\end{lemma}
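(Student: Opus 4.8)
The plan is to combine Lawson's Theorem~\ref{thm-lawson01} with the explicit exponential map from Lemma~\ref{exp}. By that theorem together with the discussion following Lemma~\ref{exp}, the border subgroups of $G_1$ are precisely the images $\exp(\ell)$ of the hyperplane subalgebras $\ell$ of $\g_1$, which here are one-dimensional. Since $\g_1$ is two-dimensional, every line through the origin is trivially a subalgebra, so I would first enumerate them: for each $l \in \R$ the slope-$l$ line $\{(t, lt) : t \in \R\}$, together with the vertical line $\{(0, t) : t \in \R\}$. There are no other one-dimensional subalgebras, so this accounts for all border subgroups.

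Next I would push each line forward through $\exp$, reading off the answer in the group coordinates $(x,y)$ under the identification $G_1 \cong \g_1 = \R^2$. For the slope-$l$ line, the formula in Lemma~\ref{exp} gives $\exp(t, lt) = (t, l(e^t - 1))$ for $t \neq 0$ and $\exp(0,0) = (0,0)$, which is exactly the curve $y = l(e^x - 1)$; its derivative at the origin equals $l$ (matching the slope of the preimage line), which justifies the terminology. For the vertical line, $\exp(0, t) = (0, t)$, yielding the line $x = 0$. This establishes the stated description of the border curves.

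For the numbered claims I would argue as follows. Claim (1) is immediate from the bijectivity of $\exp$ in Lemma~\ref{exp}: each nonzero point of $\g_1$ lies on a unique line through the origin, hence each nonzero point of $G_1$ lies on a unique border curve; concretely, for $x \neq 0$ one solves $l = y/(e^x - 1)$ uniquely, while a point with $x = 0$ lies only on $x = 0$. Claim (2) is a sign analysis: since $e^x - 1 > 0$ for $x > 0$ and $e^x - 1 < 0$ for $x < 0$, the sign of $y = l(e^x - 1)$ shows that for $l \geq 0$ the curve meets only the first and third closed quadrants, and for $l \leq 0$ only the second and fourth.

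The one step needing care is Claim (3), though it too reduces to tracking the sign of $e^x - 1$. For a point $z = (x, y)$, lying below the curve of slope $l$ means $y < l(e^x - 1)$, while its own slope is $l_z = y/(e^x - 1)$. In the fourth quadrant $e^x - 1 > 0$, so dividing preserves the inequality and gives $l_z < l$; in the second quadrant $e^x - 1 < 0$, so dividing reverses it and gives $l_z > l$. These are exactly the asserted equivalences. The main obstacle is purely notational, namely keeping the Lie-algebra coordinates distinct from the group coordinates and bookkeeping the sign of $e^x - 1$ across the two quadrants, rather than anything conceptual.
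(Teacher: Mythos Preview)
Your proposal is correct and follows essentially the same approach as the paper: enumerate the one-dimensional subalgebras of $\g_1$ as lines through the origin, push them forward via the explicit formula for $\exp$ from Lemma~\ref{exp} to obtain the curves $y=l(e^x-1)$ and $x=0$, and then verify (1)--(3) by elementary sign analysis of $e^x-1$ in each quadrant. Your write-up is slightly more detailed (e.g., the remark on the derivative at the origin justifying the name ``slope''), but the argument is the same.
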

\begin{proof}
    A codimension 1 subspace of $\g_{1}$ is $1$-dimensional, so it is a subalgebra.
    The border subgroups are images under $\exp$ of these
    subalgebras. If the subalgebra is $\{t(a, b): t \in \R\}$, then by
    Lemma~\ref{exp}, the associated boundary curve is given by $\{(ta,
    \frac{b}{a}(e^{ta} - 1)) : t \in \R\}$ which is the curve $y = l(e^{x} -
    1), l\in \R,$  or $x = 0$. Now, (1) and (2) follows from the definition of the curves. For (3), note that
    for $z = (x_{0}, y_{0})$, $l_{z} = \frac{y}{e^{x_{0}} - 1}$, and hence for
    $z$ in the fourth quadrant (resp. second quadrant), $z$ is below the curve
    $y = l(e^{x} - 1)$ if and only if $l > l_{z}$ ($l < l_{z}$) as
    $e^{x_{0}} - 1 > 0$ ($< 0$).
\end{proof}

\begin{center}
        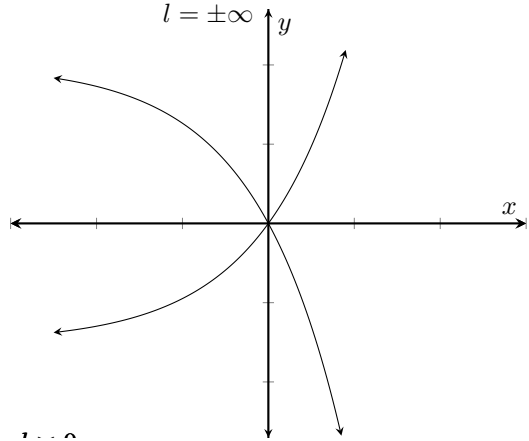
\begin{figure}[h]
        \begin{tikzpicture}[>=stealth]
            \begin{axis}[
                    xmin=-3,xmax=3,
                    ymin=-2.7,ymax=2.7,
                    axis x line=middle,
                    axis y line=middle,
                    axis line style=<->,
                    xlabel={$x$},
                    ylabel={$y$},
                    xticklabels={,,},
                    yticklabels={,,},
                    samples=50
                ]
                \addplot[no marks,<->]
                expression[domain=-2.5:0.85,samples=100]{-2*(exp(x)-1)}
                    node[pos=0,anchor=south west]{$l<0$};
                \addplot[no marks,<->]
                expression[domain=-2.5:0.9,samples=100]{1.5*(exp(x)-1)}
                    node[pos=-0.1,anchor=south west]{$l>0$};
                \addplot[no marks,thick,<->]
                expression[domain=-3:3,samples=10]{0}
                    node[pos=0,anchor=south west]{$l=0$};
                \addplot[no marks,thick,<->] coordinates {
                    (0,3)
                    (0,-3)
                };
                \addplot[no marks,nodes near coords={$l=\pm\infty$}]
                coordinates {
                    (-.7,2.4)
                };
            \end{axis}
        \end{tikzpicture}
        \caption{Different types of boundary curves}
    \end{figure}
    \end{center}

\begin{lemma}
    \label{H_mn products} Let $\pi$ be projection of $H_{mn} \cong \R^{m} \ltimes \R^{n}$
    onto $\R^{m}$, $S \subseteq H_{mn}$ with $\pi(S)$ nonseparated, $U=U(S)$ and $z_{0} = (w, a, \mathbf{b}),$ $z =
    (w', a', \mathbf{b'})\in \overline{U}$ with $a < 0, a' > 0$. Then
    $\left(0, 0, \frac{\mathbf{b}}{1 -e^{a}} + \frac{\mathbf{b'}}{e^{a'} - 1}\right)\in \overline{U}$.
\end{lemma}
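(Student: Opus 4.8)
The plan is to realize the target as a limit of explicit words in $\overline U$ of the form $z_0^{\,p}\,C\,z^{\,q}$, where $p,q\to\infty$ and $C$ is a ``correction'' word built from $S$. The starting point is the formula for powers: from~\eqref{eq:1} one computes $z_0^{\,p}=(pw,\,pa,\,c_p\mathbf b)$ with $c_p=\frac{1-e^{pa}}{1-e^a}$, and $z^{\,q}=(qw',\,qa',\,d_q\mathbf b')$ with $d_q=\frac{e^{qa'}-1}{e^{a'}-1}$. Since $a<0$ we have $c_p\to\frac{1}{1-e^a}$, so the first summand of the height already converges to $\frac{\mathbf b}{1-e^a}$; note this is exactly the attracting fixed point of the contraction $\mathbf x\mapsto\mathbf b+e^a\mathbf x$ induced by $z_0$, while the desired second summand $\frac{\mathbf b'}{e^{a'}-1}$ is minus the repelling fixed point of the expansion induced by $z$.

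The core computation is the $\mathbf b$-coordinate of $z_0^{\,p}Cz^{\,q}$. Writing $C=(u,\gamma,\mathbf c)$, a direct expansion using~\eqref{eq:1} gives height $c_p\mathbf b+e^{pa}\mathbf c+e^{\,pa+\gamma}d_q\,\mathbf b'$. I would first fix the scalar parameters: choose integers $p,q\to\infty$ with $pa+qa'\to0$ (possible since $a<0<a'$, by taking $p/q\to a'/|a|$ along values for which $pa+qa'$ is as small as we wish), and arrange $\gamma\approx-(pa+qa')$ so that the total $a$-coordinate tends to $0$. Then $pa+\gamma\approx-qa'$, and the third summand becomes $e^{-qa'}d_q\mathbf b'=\frac{1-e^{-qa'}}{e^{a'}-1}\mathbf b'\to\frac{\mathbf b'}{e^{a'}-1}$ as $q\to\infty$; this is the key cancellation, in which the factor $e^{pa+\gamma}$ contributed by the contracting prefix exactly tames the growth of $d_q$. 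Simultaneously $e^{pa}\to0$, so the middle summand $e^{pa}\mathbf c$ is killed provided $\mathbf c=\mathbf b(C)$ does not grow faster than $e^{-pa}=e^{p|a|}$. Thus, \emph{granting} a correction word $C$ whose $\pi$-image approximately cancels $p\,(w,a)+q\,(w',a')$ and whose height grows subexponentially in $p$, the words $z_0^{\,p}Cz^{\,q}$ lie in the closed semigroup $\overline U$ and converge to $\bigl(0,0,\frac{\mathbf b}{1-e^a}+\frac{\mathbf b'}{e^{a'}-1}\bigr)$, which is then in $\overline U$.

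So everything reduces to producing $C$, and this is where the main obstacle lies. Because $\pi(S)$ is nonseparated in $\R^m$, it lies in no closed half-space; by a compactness argument on the unit sphere of normals this yields finitely many elements of $S$ whose $\pi$-images positively span $\R^m$ (in particular realizing both signs of the $a$-coordinate). Hence $-\bigl(p(w,a)+q(w',a')\bigr)$, whose $a$-component is the small quantity $-(pa+qa')$ and whose $\R^{m-1}$-component $-(pw+qw')$ has norm $O(p)$, is a nonnegative combination of these generators, realizable by an integer word $C$ of length $O(p)$. The height of such a word is controlled by interleaving generators of opposite $a$-sign so that every partial $a$-sum stays in a fixed bounded window; then each factor's contribution to $\mathbf b(C)$ is scaled by a bounded amount and $\mathbf b(C)=O(p)$, which is subexponential as required. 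The one remaining subtlety is passing from the $O(1)$ error coming from integer rounding to exact cancellation of the $\pi$-image in the limit: here I would use that $\overline{U(\pi(S))}$ is a \emph{group} (the abelian case of Semigroup Conjecture~1, which is known), so the bounded residual lies in it and can be approximated to within $1/k$ by a further short word inserted in the contracting region, a diagonal choice of $p,q$ growing fast enough relative to the precision keeping the extra height negligible after multiplication by $e^{pa}$. I expect this simultaneous control of the $\R^{m-1}$-cancellation and of the height growth of $C$ to be the technical heart of the argument; the case $m=1$ is clean precisely because then no $\R^{m-1}$-correction is needed and one may simply take $C$ trivial.
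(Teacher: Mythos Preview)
Your overall architecture---words $z_0^{p}\,C\,z^{q}$ with the $\mathbf b$-height computed exactly as you do---matches the paper's, but two technical choices differ, and one of them leaves a real gap.

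\textbf{Simultaneous approximation vs.\ residual fix-up.} The paper does not separate $p,q$ from the rest. It writes $0=\sum_{i=0}^{l+1}\alpha_i v_i$ in $\R^m$ with all $\alpha_i>0$, where $v_0=(w,a)$, $v_{l+1}=(w',a')$, and $v_1,\dots,v_l\in\pi(S)$ come from nonseparation; a compactness argument on the torus $\T^{l+2}$ then produces integers $t_i^{(k)}\to\infty$ with $t_i^{(k)}-t_k\alpha_i\to 0$ and hence $\sum_i t_i^{(k)}v_i\to 0$. This single device handles the $\R^{m-1}$-cancellation and the integer rounding at once, so no $O(1)$ residual ever appears. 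Your fix-up is the weak step: you note (correctly) that the residual lies in the group $\overline{U(\pi(S))}$ and propose to approximate it to precision $1/k$ by a further word whose height is absorbed by $e^{pa}$ via a diagonal choice of $p$. But you give no control on how long that correcting word must be as a function of $k$ and of the residual, hence no control on its height, so you cannot justify that $p$ can be chosen to outpace it. This is probably repairable with quantitative Diophantine input, but as written it is a gap that the paper's torus argument avoids entirely.

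\textbf{Ordering vs.\ interleaving.} To kill the middle contribution the paper does not interleave. It orders $z_1,\dots,z_l$ by nondecreasing $a_i$-coordinate and observes that every partial $a$-sum $\sum_{j=0}^{i} t_j^{(k)}a_j$ with $i\le l$ tends to $-\infty$: for $a_i\le 0$ this is immediate, and for $a_i>0$ the remaining terms $j>i$ all have positive $a$-coordinate with sum $\to+\infty$, forcing the prefix to $-\infty$ since the total tends to $0$. Hence each middle factor's $\mathbf b$-contribution is multiplied by an exponential going to $0$ and vanishes term by term; no bound on $\|\mathbf c\|$ is needed at all. Your interleaving to keep partial sums in a bounded window is correct in spirit and does give $\|\mathbf c\|=O(p)$, but it is more work; once you adopt the torus argument, the ordering trick is the cleaner route and your remark that ``$m=1$ is clean because no correction is needed'' becomes unnecessary, since the same proof covers all $m$.
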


\begin{proof}
    Let $v_{0} = (w, a), v = (w', a').$  As $\pi(S)$ is nonseparated,
    there exists a finite subset $F=\{v_{1}, \ldots, v_{l}\}$ of $\pi(S)$ such
    that the interior of its convex hull contains $0$. Denote $v = v_{l+1}, z = z_{l+1}$. Then for small
    $\alpha_{0}, \alpha_{l+1} > 0$, $-\alpha_{0} v_{0} - \alpha_{l+1}
    v_{l+1}$ is in the convex hull of $F$ and there
    exist $\alpha_{i} \ge 0, 1\le i\le l,$ such that
    $\displaystyle\sum_{i=0}^{l+1} \alpha_{i} v_{i} = 0$. Without loss of generality, assume
    $\alpha_{i} > 0$. Thus
    $\displaystyle\sum_{i= 0}^{l+1} t\alpha_{i} v_{i} = 0$ for $t$ any positive
    integer. Denote by $\hat{t}$ the projection of $(t\alpha_{0}, \ldots,
    t\alpha_{l+1})$ in the $(l+2)$-torus $\T^{l+2}=\R^{l+2}/\Z^{l+2}$.
    As $\T^{l+2}$ is compact group and the set $\{\hat{t}:t\in \N\}$ is a semigroup, there exists a subsequence $t_{k}$
    such that $\hat{t_{k}} \rightarrow 0.$ Denote by $t_{i}^{(k)}$ the
    integer closest to $t_{k}\alpha_{i}$. Then, as $t_{k} \alpha_{i}$ is
    increasing and unbounded, $t_{i}^{(k)} \rightarrow \infty$. Now, as $t_{k}
    \alpha_{i}$ converges to $0$ in $\T$, we note that for each $\epsilon > 0$,
    there exists an $k_{0} > 0$ such that for $k > k_{0}$, there exists
    some integer $r$ such that $|r - t_{k} \alpha_{i}| < \epsilon$. But then,
    $|t_{i}^{(k)} - t_{k} \alpha_{i}| < \epsilon,$ thus
    $t_{i}^{(k)} - t_{k}\alpha_{i} \rightarrow 0$ and $\displaystyle
    \sum_{i=0}^{l+1} t_{i}^{(k)} v_{i} \rightarrow 0.$

We now assume without loss of generality that $v_{1}, \ldots, v_{l}$ are in nondecreasing order with respect to their $a_{i}$ coordinate. Since $v_{i} \in \pi(S)$ there exists $z_{i} = (v_{i}, b_{i}) \in S$. By induction $z_{i}^{t} = (tv_{i}, b_i\frac{1 -
    e^{ta_{i}}}{1 - e^{a_{i}}})$ with the fraction equal to $t$ for $a_{i} =
    0.$ Thus,
    \[
        z_{0}^{t_{0}^{(k)}}\cdots z_{l+1}^{t_{l+1}^{(k)}} = \left(\sum_{i}
        t_{i}^{(k)} v_{i}, \sum_{i} \frac{1 - e^{t_{i}^{(k)}
        a_{i}}}{1 - e^{a_{i}}} e^{\sum_{j=0}^{i-1} t_{j}^{(k)}a_{j}}b_{i}\right).
    \]
    Now, for $i \not= l+1$, $\displaystyle\sum_{j=0}^{i} t_{j}^{(k)}a_{j}
    \rightarrow -\infty.$ This is clear when $a_{i} \le 0$ and holds for $a_{i}
    >0$ as well, since otherwise the sum for $i = l+1$ would converge to $\infty$ and not $0$ as
    constructed. Thus,
    for $i \not= 0, l+1$, $a_{i}\not= 0$,
    \[
        \frac{1 - e^{t_{i}^{(k)} a_{i}}}{1
        - e^{a_{i}}} e^{\sum_{j=0}^{i-1} t_{j}^{(k)}a_{j}} =
        \frac{e^{\sum_{j=0}^{i-1} t_{j}^{(k)}a_{j}} -
        e^{\sum_{j=0}^{i}t_{j}^{(k)}a_{j}}}{1 - e^{a_{i}}} \to 0
    \]
    and for $i \not= 0, l+1, a_{i} = 0$, as exponentials dominate over a linear
    term, we get the same result. Hence,
    \[
        z_{0}^{t_{0}^{(k)}}\cdots z_{l+1}^{t_{l+1}^{(k)}}
        \to \left(0, \frac{\mathbf{b_{0}}}{1 - e^{a_{0}}} +
        \frac{\mathbf{b_{l+1}}}{e^{a_{l + 1}} - 1}\right)
        = \left(0, \frac{\mathbf{b}}{1- e^{a}} + \frac{\mathbf{b'}}{e^{a'}-1}\right)\in \overline{U}.
\]
\end{proof}

For the following lemma we use an approach suggested by the referee, that simplifies our original proof.
It is convenient to identify $G_1$ with the open right hand half-plane endowed with the operation $(a,b)\cdot(c,d)=(ac, ad+b).$
Note that the topology on $G_1$ coincides with the induced topology from $\R^2$ and the operation $\cdot$ can be extended continuously to the whole plane.
The first quadrant in $G_1$ is identified with the set
of points in the upper half-plane to the right of $x=1$, the second quadrant with the set of points in the upper half-plane with
$x$-coordinate greater than $0$ and to the left of $x=1$, the third quadrant with the set of points in the lower half-plane with
$x$-coordinate greater than $0$ and to the left of $x=1$,  and the fourth quadrant with the set
of points in the lower half-plane to the right of $x=1$. The connected $1$-dimensional subgroups are precisely the
lines passing through $(1,0)$ intersected with $G_1$. If $(x,y)\in G_1$ with $0<x<1$, then $\{(x,y)^n\}$ converges to $(0, y/(1-x))$.

\begin{lemma}\label{lemma-quadrants01} Let $U$ be a nonseparated subsemigroup of $G_1$. For each open quadrant of $G_1$, the set of points belonging to $U$ in that quadrant has an unbounded set of $y$-coordinates. In particular, for each quadrant $Q$ in $G_1$, the set $U\cap Q$ is nonempty.
\end{lemma}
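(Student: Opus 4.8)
\emph{Overall approach.} I will work in the half-plane model of $G_1$ introduced just above, where $(a,b)\cdot(c,d)=(ac,ad+b)$ and the four open quadrants are the regions determined by the border subgroup $\{x=1\}$ and the axis $\{y=0\}$. The plan is to reduce the whole statement to a single fact about the translation subgroup $\{x=1\}$ — namely that the closed sub-semigroup $T:=\overline{U}\cap\{x=1\}$ contains a point with $y>0$ and a point with $y<0$ — and then to generate the required quadrant points by multiplication. Throughout I use that, since $U$ is nonseparated and the two closed half-planes $\{x\ge 1\}$, $\{x\le 1\}$ are maximal sub-semigroups bounded by the border subgroup $\{x=1\}$, the set $U$ meets both $\{x>1\}$ and $\{x<1\}$; fix once and for all a right-half point and a left-half point of $U$.

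\emph{Reduction.} Since $\overline{U}$ is a (closed) semigroup, $T$ is a sub-semigroup of the translation group $\cong(\R,+)$, recorded by the $y$-coordinate, because $(1,c)\cdot(1,c')=(1,c+c')$. Hence if $T$ contains $(1,c_+)$ with $c_+>0$ and $(1,c_-)$ with $c_-<0$, it contains all multiples $(1,nc_\pm)$, $n\ge 1$. Multiplying these on the left of the fixed right-half point $(x',y')$ gives $(1,nc_\pm)\cdot(x',y')=(x',\,y'+nc_\pm)\in\overline{U}$, which lie in the first and fourth open quadrants with unbounded $y$-coordinates; multiplying instead by the fixed left-half point lands in the second and third quadrants with unbounded $y$-coordinates. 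Since each quadrant $Q$ is open, a point of $\overline U$ lying in $Q$ with large $|y|$ is approximated by points of $U$ that still lie in $Q$ and have large $|y|$, so unbounded $y$ on $\overline U\cap Q$ transfers to unbounded $y$ on $U\cap Q$. This is exactly the assertion (the ``in particular'' being immediate).

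\emph{Producing the two signs in $T$.} Here Lemma~\ref{H_mn products} does the work. Taking $S=U$ (so $\pi(U)$ is nonseparated in $\R$, as $U$ meets both $\{x>1\}$ and $\{x<1\}$), the lemma yields $(1,c)\in\overline{U}$ for every left point $(x_0,y_0)$ and right point $(x',y')$ of $U$, where $c=\frac{y_0}{1-x_0}+\frac{y'}{x'-1}$. A one-line computation rewrites this as $c=l'-l_0$, the difference of the slopes $l'=\frac{y'}{x'-1}$ and $l_0=\frac{y_0}{x_0-1}$ of the rays from $(1,0)$ through the two points. So to populate $T$ with both signs it suffices to find a right/left pair with $l'>l_0$ and another with $l'<l_0$.

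\emph{The main obstacle.} The crux, and the only essential use of nonseparatedness, is this slope comparison, which I handle by contradiction. Suppose every element $(1,y)$ of $T$ had $y\le 0$. Then $c\le 0$ for all admissible pairs, i.e. $l'\le l_0$ for every right point and left point, and moreover every genuine translation contained in $U$ has $y\le 0$. Choosing $L$ with $\sup_{\text{right}} l'\le L\le\inf_{\text{left}} l_0$, a direct check (using $x'-1>0$ and $x_0-1<0$) shows that every right point, every left point, and every such translation lies in the closed half-plane $\{y\le L(x-1)\}$. This half-plane is a sub-semigroup — it is the image of an upper half-plane $\{y\ge -L(x-1)\}$ under the automorphism $(x,y)\mapsto(x,-y)$, and upper half-planes are visibly closed under the operation — hence a maximal sub-semigroup by Theorem~\ref{thm-lawson01}; so $U$ would be separated, a contradiction. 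Therefore $T$ has an element with $y>0$, and the mirror argument (with $\{y\ge L(x-1)\}$) produces one with $y<0$, completing the reduction and the proof.
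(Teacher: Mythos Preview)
Your proof is correct and takes a genuinely different route from the paper's. The paper (following a referee's suggestion) extends the closure of $U$ to all of $\R^2$ and studies the boundary set $B=\{b:(0,b)\in\overline{U}\}$ on the line $\{x=0\}$; it shows that if $B$ were bounded above by $b^*$, then multiplying a point of $U$ lying strictly above the line through $(0,b^*)$ and $(1,0)$ (such a point exists by nonseparation) into a sequence in $U$ converging to $(0,b^*)$ would produce a limit on $\{x=0\}$ with second coordinate $b^*+\varepsilon$, violating the supremum. This handles the second quadrant directly, and the remaining three quadrants are obtained from the topological (anti-)automorphisms $(x,y)\mapsto(x,-y)$ and inversion. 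Your argument stays inside $G_1$: you replace the boundary set $B$ by the translation subgroup $T=\overline{U}\cap\{x=1\}$, feed left/right pairs into Lemma~\ref{H_mn products} to populate $T$, and read the resulting $y$-coordinate as the slope difference $l'-l_0$; the contradiction then comes from a sup/inf slope-separation that traps $U$ in a half-plane $\{y\le L(x-1)\}$ bounded by a one-parameter subgroup. Once $T$ contains both signs, a single left-multiplication step covers all four quadrants at once. Your approach buys a clean reuse of Lemma~\ref{H_mn products} and a uniform endgame for the four quadrants, at the price of depending on that lemma's torus/Diophantine argument; the paper's approach is self-contained and more elementary, using only the contractive dynamics $(x,y)^n\to(0,y/(1-x))$ when $0<x<1$ together with a one-step multiplication.
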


\begin{proof} Consider the set $B:=\{b:(0,b)\in \overline{U}\},$ where $\overline{U}$ is the closure of $U$ in $\R^2$. The set $B$ is non-empty. Indeed, the right half-plane $x\ge 1$ is a maximal subsemigroup, thus there exists $(x,y)\in U$ with $0<x<1$. As $\{(x,y)^n\}$ converges to $(0, y/(1-x))$, one has $y/(1-x)\in B$. If $B$ is not bounded above, then the lemma is true for the second quadrant. Indeed, one can chose a sequence $\{(0,b_n)\}_n$ with $b_n\in B, b_n\to \infty, \{b_n\}_n$ strictly increasing and with $\vert b_{n+1}-b_n\vert>C>0$, and then approximate each $(0,b_n)$ close enough by an element in $U$. Otherwise, assume that it has a least upper bound $b^*$. Then $(0,b^*)\in \overline{U}$, so one can find a sequence
$\{(x_n,y_n)\}\subseteq U$ converging to $(0,b^*)$. By our hypothesis on $U$, there exists $(c,d)\in U$ that is not contained in the maximal semigroup $H\cap G_1$, where $H$ is the closed half plane lying below the line determined by $(0,b^*)$ and $(1,0)$. Note that the point $(c,b^*-b^*c)$ lies on the line determined by $(0,b^*)$ and $(1,0),$ and hence since $(c,d)$ is above this line, $\varepsilon:=d-(b^*-b^*c)>0.$ We thus obtain:
\begin{equation*}
\begin{gathered}
(c,d)\cdot (x_n,y_n)\to (c,d)\cdot (0,b^*)=(0,b^*c+d)\\
=(0,d-(b^*-b^*c)+b^*)=(0,b^*+\varepsilon),
\end{gathered}
\end{equation*}
a contradiction to the fact that $b^*$ is the supremum of $B$.

Since $(x,y)\to (x,-y)$ is a topological isomorphism of $G_1$, hence preserves the property of nonseparated semigroup, one concludes that the lemma is true for the third quadrant. Since inversion in $G_1$ is a topological anti-isomorphism carrying the second quadrant to the fourth and the third to the first, we conclude that the lemma is true for all four open quadrants.
\end{proof}

\begin{lemma}
    \label{separation}
    Let $S \subseteq \R^{n}$. If for each $1$-dimensional subspace $X\subseteq \R^n$ and each projection $\tau: \R^{n} \rightarrow X$, $\tau(S)$ is nonseparated, then $S$ is nonseparated.
\end{lemma}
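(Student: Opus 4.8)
The plan is to prove the contrapositive: if $S$ is separated, then there is some line $X$ and some projection $\tau$ onto $X$ for which $\tau(S)$ is separated in $X$. The starting point is to pin down the maximal subsemigroups of $\R^n$. Since $\R^n$ is abelian and simply connected, every codimension $1$ subspace is automatically a subalgebra, so by the structure recalled above (Theorem~\ref{thm-lawson01}) the maximal subsemigroups of $\R^n$ are exactly the closed half-spaces $\{x : \langle x, w\rangle \ge 0\}$ through the origin, with $w \neq 0$. Thus $S$ being separated means precisely that $S \subseteq \{x : \langle x, w\rangle \ge 0\}$ for some nonzero $w$.

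First I would fix such a $w$ witnessing the separation of $S$, and take $X = \R w$ together with the orthogonal projection $\tau$ onto $X$, namely $\tau(x) = \frac{\langle x, w\rangle}{\langle w, w\rangle}\, w$. The key observation is that $\langle s, w\rangle \ge 0$ for every $s \in S$, so each $\tau(s)$ is a nonnegative multiple of $w$; hence $\tau(S)$ is contained in the ray $\{t w : t \ge 0\}$. After the identification $X \cong \R$ sending $t w \mapsto t$, this ray is exactly the maximal subsemigroup $[0,\infty)$ of $\R$. Therefore $\tau(S)$ lies in a maximal subsemigroup of $X$ and is separated, contradicting the hypothesis. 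This establishes the contrapositive and hence the lemma.

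I expect the only point requiring care — and it is bookkeeping rather than a genuine obstacle — to be the identification of which maximal subsemigroup to use. Concretely, one must see that as $X$ ranges over all lines and $\tau$ over all projections onto them, the induced coefficient functionals run (up to positive scaling, which is all that affects the sign structure detecting separation) over every nonzero linear functional on $\R^n$. This is what makes the hypothesis equivalent to the statement that $\langle \cdot, w\rangle$ takes both signs on $S$ for every $w \neq 0$, i.e. to $S$ being nonseparated. With that dictionary in hand, the argument reduces to a direct unwinding of the fact that a maximal subsemigroup of $\R^n$ is a half-space through the origin, and exhibiting the single projection onto the normal direction $w$ as the witness.
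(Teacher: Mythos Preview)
Your proof is correct and follows essentially the same route as the paper: both argue by contraposition (the paper phrases it as contradiction), identify a closed half-space containing $S$, and exhibit a single projection onto a line that sends $S$ into a half-line. The only cosmetic difference is that you use the orthogonal projection onto the normal line $\R w$, whereas the paper picks an arbitrary complementary line $X$ to the boundary hyperplane $H$ and the projection with kernel $H$; since $w^\perp = H$, your choice is a specific instance of theirs.
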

\begin{proof}
    Suppose, by contradiction, that $S$ is contained in a closed half-space
    $M$.  Let $H$ be the boundary hyperplane. Choose a complementary
    $1$-dimensional subspace $X$ and let $\tau$ be the projection onto $X$ with kernel
    $H$. Then, without loss of generality, $\tau(S) \subseteq [0, \infty)$ as
    $S$ is contained on one side of $H$, a contradiction.
\end{proof}

We now prove the following theorem, which together with Theorems \ref{s1rev1} and
\ref{s1fwd} proves the Semigroup Conjecture 1 for $H_{mn}$.

\begin{theorem}
    \label{s1rev2}
    Let $S \subseteq H_{mn}$ nonseparated, $U=U(S)$.  Then $\overline{U}$ is a group.
\end{theorem}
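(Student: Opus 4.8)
The plan is to apply Lemma~\ref{exact} to the exact sequence $0 \to \R^n \to H_{mn} \xrightarrow{\pi} \R^m \to 0$, where $\pi$ is the projection onto the $\R^m=\R^{m-1}\times\R$ factor (the $(v,a)$ coordinates), $B=\R^n$ is the normal subgroup $\{(0,0,\mathbf b)\}$, and $A=\R^m$. By part~(4) of Lemma~\ref{exact} it suffices to establish two facts: that $\overline U\cap B$ is a group, and that $\pi(\overline U)$ is a group. Both ends of the sequence are abelian, so the engine on each end will be the already-known solution of the Semigroup Problem for $\R^k$ (the abelian base case, \cite[Lemma 5]{NP}): a nonseparated closed subsemigroup of $\R^k$ is a subgroup not contained in any hyperplane.

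The heart of the proof is to show that $\overline U\cap B$ is multiplicatively quasi-dense in $B=\R^n$, whence in particular it is a group. By Lemma~\ref{separation} this reduces to showing, for every $1$-dimensional subspace $X\subseteq\R^n$ and every projection $\tau:\R^n\to X\cong\R$, that $\tau(\overline U\cap B)$ contains numbers of both signs. Fixing such a $\tau$, I would form the continuous surjective homomorphism $\rho:H_{mn}\to G_1$, $(v,a,\mathbf b)\mapsto(a,\tau(\mathbf b))$; a direct check against~\eqref{eq:1} confirms that $\rho$ respects the $G_1$ operation. Since preimages of maximal subsemigroups under a continuous surjective homomorphism are again maximal (\cite[Lemma 3.12]{L}, exactly as in the proof of part~(6) of Lemma~\ref{exact}), the set $\rho(S)$ is nonseparated in $G_1$, and hence so is the subsemigroup $\rho(U)=U(\rho(S))$. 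Lemma~\ref{lemma-quadrants01} then guarantees that $\rho(U)$ meets all four open quadrants of $G_1$.

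I would feed these quadrant points into Lemma~\ref{H_mn products}. Taking $z_0\in U$ whose $\rho$-image lies in the second quadrant ($a<0$, $\tau(\mathbf b)>0$) and $z\in U$ whose image lies in the first quadrant ($a'>0$, $\tau(\mathbf b')>0$), Lemma~\ref{H_mn products} produces $\left(0,0,\frac{\mathbf b}{1-e^{a}}+\frac{\mathbf b'}{e^{a'}-1}\right)\in\overline U\cap B$, and since the coefficients $\frac{1}{1-e^{a}}$ and $\frac{1}{e^{a'}-1}$ are both positive, its $\tau$-value is positive. Using instead third- and fourth-quadrant points yields an element of $\overline U\cap B$ with negative $\tau$-value. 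Thus $\tau(\overline U\cap B)$ is nonseparated in $\R$ for every $\tau$, so Lemma~\ref{separation} gives that $\overline U\cap B$ is nonseparated in $\R^n$; the abelian base case then forces the closed subsemigroup $\overline U\cap B$ to be a subgroup not lying in any hyperplane, i.e.\ quasi-dense in $B$. With this in hand, part~(7) of Lemma~\ref{exact} shows $\pi(\overline U)$ is closed in $A=\R^m$; part~(6) shows $\pi(S)$ is nonseparated, so by the abelian base case $\overline{U(\pi(S))}$ is a group, and by part~(1) this group equals $\overline{\pi(\overline U)}=\pi(\overline U)$, which is therefore itself a group. Part~(4) then concludes that $\overline U$ is a group.

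The main obstacle is the middle step, establishing that $\overline U\cap B$ is full-dimensional. The delicate point is that elements of $\overline U$ lying in the fiber $B$ are not available directly; they must be manufactured as limits of products of elements whose $a$-coordinates have opposite signs, which is precisely the content of Lemma~\ref{H_mn products}. Controlling the \emph{sign} of the resulting $\tau$-component then genuinely requires points in all four quadrants of the $G_1$-projection, so Lemma~\ref{lemma-quadrants01} is indispensable rather than cosmetic. I would also take care to verify that $\rho(S)$ is honestly nonseparated in $G_1$ and that the manufactured points land in $B$ and not merely in $\overline U$, since both are needed to invoke the abelian base case and parts~(4) and~(7).
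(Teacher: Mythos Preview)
Your proof is correct and follows essentially the same route as the paper: reduce via Lemma~\ref{exact} to showing $\overline U\cap B$ is multiplicatively quasi-dense in $\R^n$, reduce that via Lemma~\ref{separation} to one-dimensional projections, project to $G_1$, invoke Lemma~\ref{lemma-quadrants01}, and feed the resulting quadrant points into Lemma~\ref{H_mn products}. The one cosmetic difference is that the paper first applies a type~C automorphism to place an element $z_0=(0,a_0,\mathbf 0)$ with $a_0>0$ in $\overline U$ and then pairs $z_0$ with a second-quadrant preimage and with a third-quadrant preimage, whereas you skip the automorphism and instead pair (second,\,first) and (third,\,fourth) quadrant elements; both choices of pairs make the two coefficients in Lemma~\ref{H_mn products} have the same sign, so the outputs have the desired $\tau$-signs either way.
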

\begin{proof}
Note that $H_{mn} \cong \R^{m} \rtimes \R^{n}$ is the central term in a short exact sequence with image $\R^{m}$ and kernel $\R^{n}$. We denote the kernel by $K$ and we denote $\R^{m} = \R^{m-1} \times A$ where $A$ is the $a$ coordinate in $G_{n}$ under the isomorphism $H_{mn}\cong \R^{m-1} \times G_{n}$.

If we show that $\overline{U} \cap K$ is multiplicatively quasi-dense in $K$, then by Lemma 2.5 (1) and (7), $\pi(\overline{U}) = \overline{\pi(U)}$ is a group, and then by Lemma 2.5 (4), $\overline{U}$ is a group. Thus, it suffices to show that $\overline{U} \cap K$ is multiplicatively quasi-dense in $K$, which due to \cite[Lemma 2.12]{MNT1} is equivalent to showing that $\overline{U} \cap K$ is nonseparated in $K$.

After applying a type C automorphism we may assume that $\overline{U}$ contains $z_{0} = (0, a_{0}, 0)$ for some $a_{0} \in \R^{+}$. Denote $\overline{U} \cap K$ by $S'$. By Lemma 2.9, in order to show that $S'$ is separated, it suffices to show that for any one-dimensional subspace $X$ and any projection $\tau$ from $K$ to $X$, $\tau(S')$ is nonseparated.
So let $X = \text{span} \{B_{1}\}$ and let $\{B_{2}, \ldots, B_{n}\}$ be a basis for $\ker(\tau)$. Let $\pi$ be the projection from $H_{mn}$ onto the span of $A$ and $X$. Then, $\pi$ is a surjective Lie group homomorphism onto $G_{1}$. By Lemma 2.5 (6), its image is nonseparated.
Hence by Lemma 2.8, we find $(a, b), (a', b') \in \pi(U)$ such that $a, a', b' < 0$, $b > 0$. Thus, $U$ contains elements $z = (x,a,\mathbf{b}), z' = (x', a', \mathbf{b'})$ with $\mathbf{b}$ and $\mathbf{b'}$ having $B_{1}$ coordinate $b, b'$ respectively.
Thus, applying Lemma 2.7 to $z_{0}, z$ and to $z_{0}, z'$, we conclude that $S'$ contains $(0, 0, \frac{\mathbf{b}}{1 - e^{a}})$ and $(0, 0, \frac{\mathbf{b'}}{1 - e^{a'}}).$ Thus, $\tau(S')$ contains positive and negative elements and is hence nonseparated.
\end{proof}

\begin{corollary} A subset of $H_{mn}$ is multiplicatively quasi-dense if and only if it is non-separated.
\end{corollary}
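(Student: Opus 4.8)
The plan is to read the corollary as the conjunction of the two implications that have just been established, together with one short observation relating the ``contained in no connected codimension~1 subgroup'' condition for $S$ to the same condition for $\overline{U}$.

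For the forward implication, suppose $S$ is multiplicatively quasi-dense. Then Theorem~\ref{s1fwd} applies verbatim and yields that $S$ is nonseparated; nothing further is required.

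For the reverse implication, suppose $S$ is nonseparated and set $U=U(S)$. Unwinding Definition~\ref{good}, I must verify two things: that $\overline{U}$ is a group, and that $\overline{U}$ is not contained in any connected codimension~1 subgroup. The first is exactly the content of Theorem~\ref{s1rev2}. For the second, I would first record that every connected codimension~1 subgroup $H$ of $H_{mn}$ is closed---either because it is a connected Lie subgroup of a simply connected solvable Lie group, or directly because it is a border subgroup, i.e.\ the boundary of a closed maximal subsemigroup. Hence $S\subseteq H$ forces $U\subseteq H$ and therefore $\overline{U}\subseteq \overline{H}=H$; contrapositively, if $S$ lies in no connected codimension~1 subgroup, then neither does $\overline{U}$. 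Since Theorem~\ref{s1rev1} guarantees precisely that a nonseparated $S$ is contained in no connected codimension~1 subgroup, the same conclusion transfers to $\overline{U}$. Combining the two checks shows that $S$ is multiplicatively quasi-dense.

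There is no genuinely new obstacle at this stage: all the substantive work has already been carried out in Theorems~\ref{s1fwd}, \ref{s1rev1}, and~\ref{s1rev2}, and the corollary is simply their assembly. The only point demanding a moment's care is the passage from a containment statement about $S$ to one about $\overline{U}$, which is exactly what the closedness of connected codimension~1 subgroups supplies.
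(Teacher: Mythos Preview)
Your assembly of Theorems~\ref{s1fwd}, \ref{s1rev1}, and~\ref{s1rev2} is exactly how the paper intends the corollary to follow (it states the corollary without a separate proof, having announced just before Theorem~\ref{s1rev2} that those three theorems together yield Semigroup Conjecture~1).

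There is, however, a small logical slip in your passage from $S$ to $\overline{U}$. You prove the implication $S\subseteq H\Rightarrow\overline{U}\subseteq H$ (using that $H$ is a closed subgroup) and then call the desired conclusion its ``contrapositive''; but the contrapositive of that implication is $\overline{U}\not\subseteq H\Rightarrow S\not\subseteq H$, which points the wrong way. What you actually need is the \emph{reverse} implication $\overline{U}\subseteq H\Rightarrow S\subseteq H$, and this is immediate from $S\subseteq\overline{U}$---no appeal to closedness of $H$ is required. With that one-line correction your argument is complete and matches the paper's intent.
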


We will now focus on proving the Semigroup Conjecture 2 for $H_{mn}$.

\begin{lemma} \label{good structure} Let $S \subseteq H_{mn}$ be multiplicatively quasi-dense. Then, there exists an automorphism $\Phi$ of $H_{mn}$, such that $(x, \ln a, \mathbf{0}), (x, \ln c{i}, |1 - c_{i}| \mathbf{e_{i}}) \in \Phi(S')$, where $a > 1, \mathbf{e_{i}}$ is the $i$th-vector in the standard basis for $B \cong \R^{n}$ and $S'\subseteq H_{mn}$ is an arbitrarily small perturbation of a finite set in $S$. Additionally, we can choose $\Phi$ such that either $(x', \ln a', 0) \in \Phi(S')$ for $a' < 1$ or $c_{1} < 1$.
\end{lemma}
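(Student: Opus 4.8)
The guiding idea is that the three families of automorphisms act on separate blocks of coordinates: a Type~A map lies in $GL(\R^{m-1})$ and moves only the $v$-coordinate, a Type~B map lies in $GL(B)$ and moves only the $\mathbf b$-coordinate, while a Type~C map sends $(v,a,\mathbf b)\mapsto(v+a\alpha,\,a,\,\mathbf b+(e^{a}-1)\beta)$, and \emph{all three fix the $a$-coordinate}. Consequently, whatever $\Phi$ I build, the $a$-coordinates of the chosen elements are untouched; this is exactly what pins down $\ln a$ and the $\ln c_i$ as those $a$-coordinates, forces $a=e^{a_0}>1$, and makes $|1-c_i|=|e^{a_i}-1|$, the normalization chosen so that $\mathbf b/(1-c_i)=\pm\mathbf e_i$ is a unit vector (the quantity extracted from such an element by Lemma~\ref{H_mn products}).

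First I would harvest raw material from $S$. Since $S$ is multiplicatively quasi-dense it is nonseparated (by the equivalence proved in the Corollary), so by Lemma~\ref{exact}(6) its image $\pi(S)$ under the projection onto $\R^{m}=\R^{m-1}\times A$ is nonseparated in $\R^{m}$, hence contained in no closed half-space; applied to the half-spaces $\{a\ge 0\}$ and $\{a\le 0\}$ this yields elements of $S$ with $a$-coordinate strictly positive and strictly negative. Likewise, by Theorem~\ref{s1rev1} the set $S$ lies in no connected codimension~$1$ subgroup; applying this to the subgroups $\R^{m-1}\times A\ltimes W$ with $W\subset B$ a hyperplane shows that the $\mathbf b$-coordinates of $S$ span $B\cong\R^{n}$. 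I then select $s_0\in S$ with positive $a$-coordinate and $s_1,\dots,s_n\in S$ with linearly independent $\mathbf b$-coordinates $\mathbf B_1,\dots,\mathbf B_n$, and perturb inside $S'$ so that every $s_i$ $(i\ge 1)$ has $a_i\neq 0$.

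The normalization of the $\mathbf b$-block is the technical heart. I first apply the Type~C map with $\beta=-\mathbf B_0/(e^{a_0}-1)$, which clears the $\mathbf b$-coordinate of $s_0$ while replacing that of $s_i$ by $\mathbf B_i'=\mathbf B_i-\frac{e^{a_i}-1}{e^{a_0}-1}\mathbf B_0$; then a Type~B map $\psi_B\in GL(B)$ carries each $\mathbf B_i'$ to $|e^{a_i}-1|\mathbf e_i$ and fixes $0$. The obstacle here is that this Type~C shift is \emph{forced} on me by the requirement $\mathbf b(s_0)=\mathbf 0$, and it may ruin the independence I need for $\psi_B$: writing $\mathbf B_0=\sum_j\mu_j\mathbf B_j$, the vectors $\mathbf B_i'$ form a rank-one perturbation of the identity and stay independent exactly when $\sum_j\mu_j\frac{e^{a_j}-1}{e^{a_0}-1}\neq 1$. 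I would secure this single open condition by an arbitrarily small perturbation of the $a_i$ (or the $\mathbf B_i$) inside $S'$, which is precisely the role played by the phrase ``perturbation of a finite set in~$S$''.

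It remains to treat the $\R^{m-1}$-block and the final dichotomy. The still-unused freedom, namely $\psi_A\in GL(\R^{m-1})$ and the parameter $\alpha$ of the Type~C map, acts on the $v$-coordinate of $\Phi(s_i)$ by $v_i\mapsto\psi_A(v_i)+a_i\alpha$, and I would choose $\psi_A,\alpha$ to bring these into the common position $x$. I expect this to be the most delicate point, since collapsing the $n+1$ points $\psi_A(v_i)+a_i\alpha$ to one requires the differences $v_i-v_0$ to be aligned by $\psi_A$ into the line $\R\alpha$, which is automatic when $m=1$ (there is no $v$-coordinate) but in general must be arranged through the choice of generators and the perturbation. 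Finally, for the ``additionally'' clause I use the negative-$a$ element found above: I either clear its $\mathbf b$-coordinate by a Type~C map to obtain $(x',\ln a',\mathbf 0)$ with $a'<1$, or, to economize on generators, assign it to be $s_1$ so that $c_1=e^{a_1}<1$.
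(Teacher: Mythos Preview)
Your overall architecture---select an element with positive $a$-coordinate, kill its $\mathbf b$-component with a Type~C map, then use a Type~B map to turn a spanning family of $\mathbf b$-coordinates into the scaled standard basis---is essentially the paper's. But two differences matter.

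First, the repeated ``$x$'' in the displayed statement is a typo (compare the paper's own proof, where the elements are written $(x_j,\ln c_j,\mathbf e_j)$, and Theorem~\ref{greatness}, where they reappear as $(w_i,\ln c_i,\dots)$). The first coordinates are \emph{not} required to coincide, so your ``most delicate point''---collapsing $n+1$ images $\psi_A(v_i)+a_i\alpha$ to a single value---is a phantom obstacle. Drop it entirely; the paper never touches the $\R^{m-1}$-block. Relatedly, the paper applies the Type~C automorphism \emph{before} choosing the elements $s_1,\dots,s_n$: since nonseparation is preserved by automorphisms, the $\mathbf b$-coordinates of $\phi(S)$ still span $B$, and you may then pick the $s_i$ inductively (at step $i$, choose any element whose $i$th $\mathbf b$-component is nonzero and send it to $\mathbf e_i$ by a Type~B map fixing $\mathbf e_1,\dots,\mathbf e_{i-1}$). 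This sidesteps your rank-one perturbation argument completely.

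Second, your treatment of the ``additionally'' clause has a genuine gap. You propose to ``clear its $\mathbf b$-coordinate by a Type~C map'', but the Type~C parameter $\beta$ is already fixed by the requirement that $s_0$ land on $(\,\cdot\,,\ln a,\mathbf 0)$; a second Type~C with a different $\beta$ would undo that normalization. The paper's argument is instead a case split made \emph{after} the single Type~C map $\phi$ and \emph{inside} the base step $i=1$: if some element of $\phi(S)$ with nonzero $\mathbf b$-part has $a$-coordinate $<0$, take it as $s_1$ (so $c_1<1$); otherwise every element of $\phi(S)$ with negative $a$-coordinate already has $\mathbf b=\mathbf 0$, and such elements are fixed by all subsequent Type~B automorphisms, giving the $(x',\ln a',\mathbf 0)$ alternative for free. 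Your ``assign it to be $s_1$'' option is the right idea for the first case, but you need this dichotomy rather than a free choice.
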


\begin{proof} As Lie group automorphisms are continuous, we may perturb $S$ before or after applying an automorphism of $H_{mn}$. We get $(x, \ln a, 0) \in \phi(S)$ by a type C automorphism $\phi$. Next we prove the following statement for $S$ by induction on $i$: for each $i$ from $1$ to $n$, there exists an automorphism $\Phi_{i}$ such that $(x_j, \ln c_j, \mathbf{e_{j}}) \in \Phi_{i}(S)$ for all $j$ from $1$ to $i$.

Base Case i = 1: By the nonseparation of $\phi(S)$, there exists $(x_{1}, \ln c_{1}, \mathbf{v})$ in $\phi(S)$ with $\mathbf{v} \not = 0$. We send this by a type $B$ automorphism $\Phi_{1}$ to $(x_{1}, \ln x_{1}, \mathbf{e_{1}}).$ Additionally, if there exists any such element with $c_{1} < 1$, we can choose $c_{1} < 1$. Otherwise by nonseparation of $\phi(S)$, there exists $(x', \ln a', \mathbf{0}) \in \phi(S)$ for $a' < 1$ which is unchanged by $\Phi_{1}$ and any of the following $\Phi_{i}$ as these are type $B$ automorphisms.

Inductive Step: Suppose the statement holds for $i - 1$ with $i \le n$. By the nonseparation of $\Phi_{i-1}(S)$, there exists an element $(x_{i}, \ln c_{i}, \mathbf{v}) \in \Phi_{i-1}(S)$ with $v_{i} \not = 0$. As $\mathbf{e_{1}}, \ldots, \mathbf{e_{i-1}}, \mathbf{v}$ are linearly independent, applying a type $B$ automorphism gives us $(x_{i}, \ln c_{i}, \mathbf{e_{i}})$ as desired, while leaving the $(x_{j}, \ln c_{j}, \mathbf{e_{j}})$ for $j < i$ unchanged. This completed the inductive step.

We now let $\Phi'$ be the automorphism $\Phi_{n}\circ \Phi_{1} \phi$. Then, we have $(x_{i}, \ln c_{i}, \mathbf{e_{i}})$ in $\Phi'(S)$ for all $i$ from $1$ to $n$. Finally, if any $c_{i}$ are $1$, we may perturb the corresponding element by an arbitrary small amount to make the $c_{i} \not = 1$. The subset $S'$ is the preimage of the new subset under $(\Phi')^{-1}.$ Then, we compose by a type B automorphism to rescale the $\mathbf{e_{i}}$ to the desired vector $|1 - c_{i}| v_{i}$ as now each $c_{i} \not = 1$. This gives the desired automorphism $\Phi$.
\end{proof}

\begin{lemma}
    \label{kronecker}
    If $S \subseteq \R^{n}$ contains the standard basis $\mathbf{e_{i}}$, an
    element $\mathbf{v}$ with $v_{i} < 0$ and if the set $\{1, v_{1}, \ldots, v_{n}\}$
    is $\Z$-linearly independent, then $S$ is multiplicatively dense.
\end{lemma}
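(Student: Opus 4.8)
The plan is to reduce the statement to the classical equidistribution theorem of Kronecker--Weyl on the torus. First I would observe that the semigroup $U(S)$ generated by $S$ contains the semigroup generated by the $n+1$ elements $\mathbf{e}_{1},\dots,\mathbf{e}_{n},\mathbf{v}$, so it suffices to show that this smaller semigroup is already dense in $\R^{n}$. Since $\R^{n}$ is written additively, its elements are exactly the nonnegative integer combinations $\sum_{i=1}^{n} n_{i}\mathbf{e}_{i}+N\mathbf{v}$ with $n_{1},\dots,n_{n},N$ nonnegative integers; that is, the points whose $i$th coordinate is $n_{i}+Nv_{i}$. Thus, fixing a target $\mathbf{x}=(x_{1},\dots,x_{n})\in\R^{n}$ and a tolerance $\varepsilon>0$, the goal becomes to produce $N\in\N$ and nonnegative integers $n_{i}$ with $|n_{i}+Nv_{i}-x_{i}|<\varepsilon$ for every $i$.

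The key step is an elementary one: choosing $n_{i}$ to be the nearest integer to $x_{i}-Nv_{i}$ makes $|n_{i}+Nv_{i}-x_{i}|$ equal to the distance from $x_{i}-Nv_{i}$ to the nearest integer, i.e. the distance from $Nv_{i}$ to $x_{i}$ in $\R/\Z$. Hence it is enough to find $N\in\N$ for which $(Nv_{1},\dots,Nv_{n})$ lies within $\varepsilon$ of $(x_{1},\dots,x_{n})$ in the torus $\T^{n}=\R^{n}/\Z^{n}$. Because $\{1,v_{1},\dots,v_{n}\}$ is $\Z$-linearly independent (equivalently $\Q$-linearly independent, after clearing denominators), the Weyl nonresonance condition $\mathbf{k}\cdot\mathbf{v}\notin\Z$ holds for every nonzero $\mathbf{k}\in\Z^{n}$, so the sequence $\{(Nv_{1},\dots,Nv_{n})\}_{N\in\N}$ is equidistributed, and in particular dense, in $\T^{n}$. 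This produces the required $N$, and equidistribution moreover guarantees infinitely many such $N$, so $N$ may be taken arbitrarily large.

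It remains to secure the nonnegativity $n_{i}\ge 0$, and this is exactly where the hypothesis $v_{i}<0$ enters. Since $x_{i}-Nv_{i}=x_{i}+N|v_{i}|\to+\infty$ as $N\to\infty$, for $N$ sufficiently large the nearest integer $n_{i}$ to $x_{i}-Nv_{i}$ is positive for every $i$ simultaneously. Taking $N$ both large and inside the prescribed residue neighborhood (possible by the previous paragraph) yields nonnegative integers $n_{i}$ and a nonnegative integer $N$ with $|n_{i}+Nv_{i}-x_{i}|<\varepsilon$, which shows that $U(S)$ is dense in $\R^{n}$ and hence that $S$ is multiplicatively dense.

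I expect the main technical point to be the interplay between the two constraints on $N$: it must simultaneously place $(N\mathbf{v}\bmod 1)$ near the target in $\T^{n}$ and be large enough to force every $n_{i}$ positive. Mere density of the orbit would not by itself obviously supply arbitrarily large return times, so I would lean on equidistribution (or equivalently on the syndetic structure of return times for a minimal toral rotation) to select $N$ large within the target neighborhood. The only other routine verification is the equivalence between $\Z$- and $\Q$-linear independence of $\{1,v_{1},\dots,v_{n}\}$ and the translation of that independence into the Weyl condition $\mathbf{k}\cdot\mathbf{v}\notin\Z$ for all nonzero integer vectors $\mathbf{k}$.
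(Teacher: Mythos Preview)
Your proof is correct and takes a somewhat different route from the paper's. The paper first observes that since the convex hull of $\{\mathbf{e}_1,\dots,\mathbf{e}_n,\mathbf{v}\}$ contains $0$ in its interior, $S$ is multiplicatively quasi-dense by an external result \cite[Lemma 2.12]{MNT1}; this means the closed semigroup equals the closed group, so one may freely use inverses. With $\Z^n$ thus available inside $\overline{U}$, the paper simply projects to $\T^n$ and invokes Kronecker for the two-sided orbit $\Z\mathbf{v}\bmod 1$. You instead work directly with the one-sided semigroup: you never pass to the group, you use Kronecker--Weyl for the forward orbit $\{N\mathbf{v}\bmod 1:N\in\N\}$, and you exploit the hypothesis $v_i<0$ not to place $0$ in a convex hull but to force the coefficients $n_i$ to be nonnegative once $N$ is large. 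Your approach is more self-contained (no appeal to the external quasi-density lemma), at the cost of the extra care you correctly flag: one needs infinitely many admissible $N$, which follows from equidistribution or from minimality of the toral rotation, but not from bare density of a single orbit point.
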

\begin{proof}
    Note that the convex hull of $S$ contains $0$ in the interior. Thus, $S$ is
    multiplicatively quasi-dense by \cite[Lemma 2.12]{MNT1}. Thus, the closure of the semigroup
    generated by $S$ is the closed group generated by $S$ and hence it suffices
    to prove that the group generated by $S$ is dense. Now, as $S$ contains the
    standard basis, $\overline{U}$ contains $\Z^{n}$. Hence, it suffices to
    show that $\pi(\overline{U})$ is dense, where $\pi$ is the projection onto
    $\T^{n}$. As $\{1, v_{1}, \ldots, v_{n}\}$ are $\Z$ linearly independent,
    there is no nontrivial relation $c_{0} + c_{1}v_{1} + \cdots +
    c_{n}v_{n} = 0,$ with $C_i$ integers. Thus, by a result of Kronecker cited in
    \cite{K}, $\Z\mathbf{v} \mod 1$ is dense in $\T^{n}$.
\end{proof}

\begin{lemma}
    \label{Kronecker measure 0}
    Fix nonzero $\alpha \in \R$. The set 
    $$X := \{\mathbf{v} = (v_{1}, \ldots,
    v_{n}) \in \R^{n}: \text{ $\{\alpha, v_{1}, \ldots, v_{n}\}$ is $\Z$
linearly dependent}\}$$ has measure zero in $\R^{n}.$
\end{lemma}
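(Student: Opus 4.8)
The plan is to write $X$ as a countable union of affine hyperplanes (together with some empty sets) and then invoke the standard fact that a countable union of Lebesgue-null sets is null. First I would unwind the definition: a point $\mathbf{v}\in\R^n$ lies in $X$ precisely when there exists a nonzero integer tuple $\mathbf{c}=(c_0,c_1,\ldots,c_n)$ with $c_0\alpha + c_1 v_1 + \cdots + c_n v_n = 0$. This gives the decomposition
\[
X = \bigcup_{\mathbf{c}\in \Z^{n+1}\setminus\{0\}} V_{\mathbf{c}}, \qquad V_{\mathbf{c}} := \Big\{\mathbf{v}\in\R^n : c_0\alpha + \textstyle\sum_{i=1}^n c_i v_i = 0\Big\}.
\]

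Next I would analyze each individual set $V_{\mathbf{c}}$. If at least one of $c_1,\ldots,c_n$ is nonzero, then the defining relation is a nontrivial affine-linear equation in $\mathbf{v}$, so $V_{\mathbf{c}}$ is an affine hyperplane in $\R^n$ and hence has measure zero. The only remaining possibility is $c_1=\cdots=c_n=0$, in which case the requirement that $\mathbf{c}$ be nonzero forces $c_0\neq 0$ and the relation degenerates to $c_0\alpha=0$. This is the single place where the hypothesis $\alpha\neq 0$ is used: since both $c_0$ and $\alpha$ are nonzero, the relation $c_0\alpha=0$ can never hold, so $V_{\mathbf{c}}=\emptyset$. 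In either case $V_{\mathbf{c}}$ is null.

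Finally, since $\Z^{n+1}\setminus\{0\}$ is countable, $X$ is a countable union of null sets and therefore has measure zero, as desired.

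There is no serious obstacle here; the argument is essentially bookkeeping. The only point that genuinely requires care is the degenerate case in which all the coefficients $c_1,\ldots,c_n$ of $\mathbf{v}$ vanish. This is exactly where the nonvanishing of $\alpha$ is essential: were $\alpha$ allowed to be $0$, that set would be all of $\R^n$ rather than empty, and the conclusion would fail.
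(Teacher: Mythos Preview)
Your proof is correct and follows essentially the same approach as the paper: both decompose $X$ as a countable union, indexed by nonzero integer tuples, of the zero sets of the affine functions $\mathbf{v}\mapsto c_0\alpha+\sum c_i v_i$, observe that each such set is either an affine hyperplane (hence null) or empty (when $c_1=\cdots=c_n=0$, using $\alpha\neq 0$), and conclude by countable subadditivity. The only cosmetic difference is that the paper phrases the nonconstant case via the submersion/codimension-one-submanifold language rather than simply saying ``affine hyperplane.''
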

\begin{proof}
    Let $\phi: \R^{n} \rightarrow \R, \phi(x_{1},
    \ldots, x_{n}) = c_{0} + c_{1}x_{1} + \cdots + c_{n}x_{n}, c_{i} \in \Z,i > 0,$ and $c_{0} \in \Z\alpha$ be
    a nonzero function. If $\phi$ is constant, then
    $\phi^{-1}(0)$ is empty and is hence measure zero. If $\phi$ is
    nonconstant,  $\phi$ is a submersion as $d\phi$ is a nonzero constant.
    Thus, $\phi^{-1}(0)$ is a submanifold of codimension 1, and hence has
    measure zero. Now, $X$ is the union of $\phi^{-1}(0)$ over all
    such $\phi$. As there are only countable many such $\phi$, $X$ has
    measure zero.
\end{proof}

\begin{lemma}
    \label{euclidean greatness}
    For any $\ell$, the set of multiplicatively dense $\ell$-tuples of $\R^{n}$ is dense in the set of
    nonseparated (i.e multiplicatively quasi-dense) $\ell$-tuples of $\R^{n}$.
\end{lemma}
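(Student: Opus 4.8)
The idea is to decouple the two features that ``multiplicatively dense'' packages together, namely nonseparation and density of the generated group, and to establish each separately by a small perturbation.

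First I would record the reduction that makes the statement tractable. For $S\subseteq\R^n$ nonseparation is equivalent to $0$ lying in the interior of the convex hull of $S$, and by \cite[Lemma 2.12]{MNT1} it is also equivalent to multiplicative quasi-density; thus if $S$ is nonseparated then $\overline{U(S)}$ is a group, and being a closed group containing $S$ it must equal the closed subgroup generated by $S$. Consequently a nonseparated $S$ is multiplicatively dense precisely when the subgroup it generates is dense in $\R^n$. Since every multiplicatively dense tuple is in particular nonseparated, it suffices to show that an arbitrary nonseparated $\ell$-tuple $p=(p_1,\dots,p_\ell)$ is a limit of multiplicatively dense tuples. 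I would also use that nonseparation is an open condition on $(\R^n)^\ell$ (the convex hull containing $0$ in its interior is stable under small perturbations) and that $GL(n,\R)$ acts by automorphisms preserving nonseparation, quasi-density and density.

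Next comes the construction. Nonseparation forces $\ell\ge n+1$ and that $\{p_1,\dots,p_\ell\}$ spans $\R^n$, so after relabeling I may assume $p_1,\dots,p_n$ are linearly independent. Choosing $T\in GL(n,\R)$ with $Tp_j=\mathbf{e}_j$ for $1\le j\le n$ puts the standard basis among the transformed points, so the generated group contains $\Z^n$. I then perturb the single point $Tp_{n+1}$: by Lemma~\ref{Kronecker measure 0} applied with $\alpha=1$, the set of $\mathbf{w}$ for which $\{1,w_1,\dots,w_n\}$ is $\Z$-linearly dependent has measure zero, hence its complement is dense, so I may move $Tp_{n+1}$ by an arbitrarily small amount to a vector $\mathbf{w}$ with $\{1,w_1,\dots,w_n\}$ $\Z$-independent. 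By Kronecker's theorem (as in \cite{K} and the proof of Lemma~\ref{kronecker}) the set $\Z\mathbf{w}$ is then dense in $\T^n=\R^n/\Z^n$, so $\Z^n+\Z\mathbf{w}$, and a fortiori the group generated by the perturbed transformed tuple, is dense in $\R^n$.

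Finally I would assemble the pieces. The perturbed transformed tuple differs from $Tp$ by an arbitrarily small amount, so by openness it is still nonseparated, and by the reduction above it is therefore multiplicatively dense. Pulling back by the automorphism $T^{-1}$ yields a multiplicatively dense tuple that differs from $p$ only in its $(n+1)$-st entry and by an amount bounded by $\|T^{-1}\|$ times the perturbation; letting the perturbation tend to $0$ exhibits $p$ as a limit of multiplicatively dense tuples, which is the claim. The point that requires care---and the reason I keep nonseparation and group-density separate rather than invoking Lemma~\ref{kronecker} wholesale---is that a nonseparated tuple need not contain, even after a linear change of coordinates, a single vector with all coordinates negative (consider the four vertices of a square about the origin in $\R^2$); the convex-hull hypothesis of Lemma~\ref{kronecker} is recovered here only through the openness of nonseparation, while the Kronecker input is used solely to force the generated group to be dense.
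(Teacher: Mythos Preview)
Your proof is correct and follows the same overall arc as the paper's---reduce via an automorphism to a tuple containing the standard basis, perturb a single entry to force a Kronecker-type $\Z$-independence condition, and conclude density---but you streamline one step. The paper does not invoke openness of nonseparation directly; instead it first produces an element $\mathbf{v}\in U(S)$ with all coordinates strictly negative (using that $\overline{U}$ is a group, so $-(\mathbf{e}_1+\cdots+\mathbf{e}_n)\in\overline{U}$, and then approximating back into $U$), writes $\mathbf{v}$ as a finite word in $S$, and perturbs one non-basis generator appearing in that word so that the resulting $\mathbf{v'}$ still has all coordinates negative and satisfies the $\Z$-independence condition. Lemma~\ref{kronecker} is then applied to the subset $\{\mathbf{e}_1,\dots,\mathbf{e}_n,\mathbf{v'}\}\subseteq U(S')$, which delivers nonseparation (via the negative vector) and density simultaneously. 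Your decoupling---handling nonseparation by openness and group-density by Kronecker applied directly to the perturbed $(n{+}1)$-st entry---avoids the detour through $U$ to find an all-negative vector and is somewhat cleaner; the paper's route has the minor advantage of invoking Lemma~\ref{kronecker} exactly as stated, with its built-in convex-hull hypothesis.
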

\begin{proof}
    As nonseparation is equivalent to multiplicatively quasi-dense in $\R^{n}$, we will prove denseness in
    multiplicatively quasi-dense $\ell$-tuples. Fix $\ell$. If there are no multiplicatively quasi-dense subsets of size $\ell$, the
    lemma is vacuously true. Suppose $S$ is a subset corresponding to a multiplicatively quasi-dense
    $\ell$-tuple. We need to find a multiplicatively dense $S'$ arbitrarily close to $S$.
    After an automorphism, $S$ contains the standard basis. Additionally, as it
    is multiplicatively quasi-dense, and $U=U(S)$ contains an element $\mathbf{w}$ with all $w_{i} > 0$,
    $\overline{U}$ and hence $U$ contains an element $\mathbf{v}$ with all $v_{i} <
    0$. Suppose $\mathbf{v} = \sum_{i=1}^{k} \mathbf{b_{i}}$ with
    $\mathbf{b_{i}} \in S$. Now, we may assume $\mathbf{b_{1}}$ is not a
    standard basis vector and that $\mathbf{b_{1}}, \ldots, \mathbf{b_{j}}$ are
    equal.

    Pulling back a neighborhood $V$ of $\mathbf{v}$ under the map $\alpha(x) =
    jx + \sum_{i = j+1}^{k} \mathbf{b_{i}}$ gives a neighborhood of
    $\mathbf{b_{1}}$.  By Lemma \ref{Kronecker measure 0}, we can find in $V$ an element $\mathbf{v'}$ such
    that each $v'_{i} < 0$ and $\{1, v'_{1}, \ldots, v'_{n}\}$ $\Z$ are linearly
    independent. Then, if $\alpha(\mathbf{b'}) = \mathbf{v'}$, replacing the subset $S$ with $S'$ by
    changing $\mathbf{b_{1}}$ to $\mathbf{b'}$, which can be done by an
    arbitrarily small perturbation, we get a multiplicatively dense subset by Lemma
    \ref{kronecker}.
\end{proof}

We now prove Semigroup Conjecture 2 for $H_{mn}$.

\begin{theorem}
    \label{greatness}
    For any $\ell$, the set of multiplicatively dense $\ell$-tuples of $H_{mn} = \R^{m} \ltimes \R^{n} = \R^{m-1}
    \times G_{n}$ is dense in the set of nonseparated $\ell$-tuples of $H_{mn}$.
\end{theorem}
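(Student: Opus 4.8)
The plan is to run the argument through the split short exact sequence
\[
0 \to B \to H_{mn} \xrightarrow{\pi} A \to 0, \qquad B=\R^{n},\ A=\R^{m},
\]
where $B$ is the normal subgroup of $\mathbf{b}$-coordinates and $A$ is the quotient recording the $(v,a)$-coordinates, and to assemble the conclusion from the Euclidean case using Lemma~\ref{exact}. Given a nonseparated $\ell$-tuple $S$ with $U=U(S)$, I would produce an arbitrarily small perturbation of $S$ that meets the three hypotheses of Lemma~\ref{exact}(5): $\pi(S)$ multiplicatively dense in $A$, $\pi(\overline{U})$ closed in $A$, and $\overline{U}\cap B$ multiplicatively dense in $B$. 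The first is a ``base'' problem, the third a ``fiber'' problem, and the second comes for free once the fiber is settled.

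For the base, $S$ nonseparated forces $\pi(S)$ nonseparated in $A=\R^{m}$ by Lemma~\ref{exact}(6); since nonseparation is exactly the hypothesis of the Euclidean statement Lemma~\ref{euclidean greatness} (applied to $\R^{m}$), an arbitrarily small perturbation makes $\pi(S)$ multiplicatively dense, and I realize it by moving only the $\R^{m}$-coordinates of the elements of $S$. For the fiber I would normalize via Lemma~\ref{good structure}: after an automorphism $\Phi$ and a small perturbation, $\Phi(S)$ contains a pure element $z_{0}=(x,\ln a,\mathbf{0})$ with $a>1$, the elements $w_{i}=(x_{i},\ln c_{i},|1-c_{i}|\mathbf{e_{i}})$, and a companion of opposite $a$-sign (either a pure element with $a'<1$ or $w_{1}$ with $c_{1}<1$). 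Pairing each $w_{i}$ with an element of opposite $a$-sign and applying Lemma~\ref{H_mn products} deposits $(0,0,\mathbf{e_{i}})$ in $\overline{U}\cap B$; as this intersection is a group, it contains $\Z^{n}$. To upgrade it to all of $B$, I would perturb the $B$-coordinate of a single pure generator from $\mathbf{0}$ to a small vector $\mathbf{d}$ with all coordinates negative; routing it through Lemma~\ref{H_mn products} deposits the all-negative vector $\mathbf{v}=\mathbf{d}/(a-1)$ in $\overline{U}\cap B$, while the standard basis is still recovered since $\mathbf{v}$ and $\mathbf{e_{i}}+\mathbf{v}$ both lie in the group. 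By Lemma~\ref{Kronecker measure 0} the bad vectors $\mathbf{d}$ for which $\{1,v_{1},\dots,v_{n}\}$ fails to be $\Z$-independent form a measure-zero set, so $\mathbf{d}$ can be chosen generic; then Lemma~\ref{kronecker} gives $\overline{U}\cap B=\R^{n}$. Crucially this last perturbation moves only $\R^{n}$-coordinates and hence leaves $\pi(S)$ untouched.

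With the fiber in hand, $\overline{U}\cap B=\R^{n}$ is multiplicatively quasi-dense in $B$, so Lemma~\ref{exact}(7) (using second countability of $A$) gives that $\pi(\overline{U})$ is closed in $A$. All three hypotheses of Lemma~\ref{exact}(5) then hold for the perturbed tuple, so it is multiplicatively dense in $H_{mn}$, and since automorphisms preserve multiplicative density we may transport the conclusion back through $\Phi^{-1}$. The step I expect to be the main obstacle is the fiber construction: one must produce a \emph{freely perturbable}, all-negative, $\Z$-generic vector inside $\overline{U}\cap B$ while retaining a full basis, which forces a careful choice of which generator's $\mathbf{b}$-coordinate to move and a verification that the limit vector extracted by Lemma~\ref{H_mn products} depends continuously and nondegenerately on that coordinate so that the measure-zero argument applies. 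A secondary difficulty is the bookkeeping that keeps the base perturbation (in the $\R^{m}$-coordinates) and the fiber perturbation (in the $\R^{n}$-coordinates) independent and compatible with the normalization of Lemma~\ref{good structure}.
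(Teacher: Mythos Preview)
Your architecture matches the paper's exactly: the same short exact sequence, the same assembly via Lemma~\ref{exact}(5),(7), the same normalization via Lemma~\ref{good structure}, the same use of Lemma~\ref{H_mn products} to populate $\overline{U}\cap B$, and the same Kronecker endgame (Lemmas~\ref{kronecker},~\ref{Kronecker measure 0}). The difference is precisely at the step you flag as the main obstacle, and your specific proposal there has a gap.

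You propose to perturb the $\mathbf b$-coordinate of the pure generator $z_{0}=(x,\ln a,\mathbf 0)$ to a small all-negative $\mathbf d$, and then extract $\mathbf v=\mathbf d/(a-1)$ via Lemma~\ref{H_mn products}. That works only when you can pair the perturbed $z_{0}$ with an element of \emph{opposite $a$-sign and vanishing $\mathbf b$-part}. In the case where the companion guaranteed by Lemma~\ref{good structure} is $w_{1}$ with $c_{1}<1$ (rather than a second pure element), every available negative-$a$ element already carries a nonzero $\mathbf b$-part, so Lemma~\ref{H_mn products} only gives you $\mathbf e_{1}+\mathbf v$, never $\mathbf v$ alone. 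The collection you then obtain in $\overline{U}\cap B$ consists of vectors close to $\mathbf e_{i}$ or $\mathbf e_{1}+\mathbf e_{i}$, with no all-negative element; for small $\mathbf v$ this set is separated in $\R^{n}$, so you cannot invoke any group property to subtract, and Lemma~\ref{kronecker} does not apply. Relatedly, your phrase ``as this intersection is a group'' is not justified after perturbation: the paper explicitly notes that the perturbed $S$ need not remain multiplicatively quasi-dense, so Theorem~\ref{s1rev2} is unavailable. The paper's remedy is to \emph{keep $z_{0}$ pure} and instead locate a product $z''\in U$ (built from the original generators) with $\ln c<0$ and all $b_{i}<0$; it then perturbs only those generators that are \emph{not} among $z,z',z_{i}$ to make $z''$ generic. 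Pairing the still-pure $z_{0}$ with $z''$ via Lemma~\ref{H_mn products} then yields the all-negative generic vector in both cases.

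A secondary point on ordering: you perturb the base first and then apply Lemma~\ref{good structure}. But multiplicative density in $\R^{m}$ is not an open condition, so the small $a$-coordinate perturbation inside Lemma~\ref{good structure} (making $c_{i}\neq 1$) can destroy it. The paper runs the normalization first and only then perturbs the base via Lemma~\ref{euclidean greatness}, relying on the fact that \emph{nonseparation} in $\R^{m}$ is open to survive the intermediate steps.
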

\begin{proof}
   If there are no multiplicatively quasi-dense subsets of size $l$, the theorem is vacuous. So, suppose $S$ is a multiplicatively quasi-dense subset of size $l$. We will apply to $S$ an arbitrary small perturbation to get a multiplicatively dense subset. Let $\pi_{1}$ be projection onto $\R^{m} = C \times A = \R^{m-1} \times \R$, where $A$ is the $a$ coordinate of $G_{n}$. Let $\pi$ be the projection onto $G_{n}$. Applying a type $C$ automorphism, we may assume $S$ contains $(0, a, \mathbf{0})$ for $a > 0$.  By Lemma 2.12, we may perturb $S$ by an arbitrarily small amount to a subset $S'$ that contains $(w_{i}, \ln c_{i}, |1 - c_{I}| \mathbf{e_{i}})$ where each $c_{i} \not = 1$ and either $c_{1} < 1$ or $S'$ also contains $(w', \ln a', \mathbf{0})$ with $a' < 1$. Now, $S'$ may no longer be multiplicatively quasi-dense, however, $\pi_{1}(S')$ is a small perturbation of $\pi_{1}(S)$. Since the latter is multiplicatively quasi-dense by Lemma 2.5, and since the property of being multiplicatively quasi-dense in $\R^{m}$ is open (as it is equivalent to $0$ being contained in the interior of the convex hull of the subset), we see that $\pi_{1}(S')$ is still multiplicatively quasi-dense.

Now, as $\pi_{1}(S')$ is multiplicatively quasi-dense, we may, by Lemma 2.15, perturb $S'$ slightly, leaving the $B = \R^{n}$ coordinates unchanged, to get a subset $S''$ such that $\pi_{1}(S'')$ is multiplicatively dense. As consecutive arbitrary small perturbations can still be made arbitrary small, we may assume $S = S''$ by forgetting that $S$ is multiplicatively quasi-dense. However, $S$ still contains $z = (w, \ln a, \mathbf{0})$, $\ln a > 0$, $z_{i} = (w_{i}, \ln c_{i}, |1 - c_{i}|\mathbf{e_{i}})$ with $c_{i} \not = 1$, and either $c_{1} < 1$ or an additional element $z' = (w', \ln a', \mathbf{0})$ with $a' < 1$ as signs can be preserved under arbitrary small perturbations.

    Additionally, note
    that $U=U(S)$ originally contained an element $z'' = (w'', \ln c, \mathbf{b})$
    with each $b_{i} < 0, \ln c < 0$ and $b_{i + 1} < b_{i} - 2(1 - c)$. As
    this is a finite sum of elements from $S$ and the perturbation is
    arbitrarily small, $U$ still contains such an element, even
    after perturbation. Suppose $z'' = \prod_{i=1}^{k} y_{i}$. Now at least one
    of the $y_{i}$ is not $z, z', z_{i}$.  Let $\{y_{j_{1}}, \ldots,
    y_{j_{r}}\}$ be the list of all such elements. Consider the continuous map $\alpha:
    H_{mn}^{r} \rightarrow H_{mn}$ defined by
    \[
         \alpha(x_{1}, \ldots, x_{r}) = y_{1}\cdots y_{j_{1} - 1}x_{1}y_{j_{1}
         + 1} \cdots x_{r}y_{j_{r} + 1} \cdots y_{k}.
    \]
    The preimage of a neighborhood of $z''$ is a neighborhood of  $(y_{j_{1}}, \ldots, y_{j_{r}})$. Thus, using Lemma
    \ref{Kronecker measure 0}, by an arbitrarily small perturbation of each of
    these, and hence of $S$, we may change
    $z''$ to $z'' = (w'', \ln c, \mathbf{b})$ such that $\frac{\ln c}{\ln a}
    \notin \Q$, $\{1 - c, b_{1}, \ldots, b_{n}\}$ are $\Z$-linearly independent and
    $b_{i+1} < b_{i} -2(1 - c)$.  Now, $\pi_{1}(S)$ is multiplicatively dense. So by Lemma
    $\ref{exact}$, it suffices to prove $\overline{U} \cap B$ is multiplicatively dense. We
    distinguish two cases.

    Case 1: Suppose $(w, \ln a', \mathbf{0})\in S, a'<1$. By Lemma
    \ref{H_mn products} with $z',z_{i}$ for $c_{i} > 1$, and with $z,z_{i}$ for $c_{i} <
    1$, and then again by Lemma \ref{H_mn products} with $z'', z$, one has $(0, 0, \mathbf{e_{i}}),\left(0, 0,
    \frac{\mathbf{b}}{1 - c}\right)\in\overline{U} \cap B$. As $\left\{1,
    \frac{b_{1}}{1 - c}, \ldots, \frac{b_{n}}{1 - c}\right\}$ is $\Z$-linearly
    independent, by Lemma \ref{kronecker}, $\overline{U} \cap B$ is multiplicatively quasi-dense.

    Case 2: Suppose $S$ contains the element $(w_{1},\ln  c_{1}, |1 - c_{1}|\mathbf{e_
    {1}})$ with $c_{1} < 1$. Then, using Lemma \ref{H_mn products}
    $\overline{U} \cap B$ contains $(0, 0, \mathbf{e_{1}}), (0, 0,
    \frac{\mathbf{b}}{1 - c})$ and either $(0, 0, \mathbf{e_{i}})$ or $(0, 0,
    \mathbf{e_{i}} +\mathbf{e_{1}})$ for each $i$ bigger than $1.$ We now apply
    an automorphism of type C which takes $(0, 0, \mathbf{e_{i}} + \mathbf{e_{1}})$ to
    $(0, 0, \mathbf{e_{i}})$ where necessary to get the standard basis of
    $\overline{U} \cap B$. The matrix of this transformation is the identity
    with some extra -1 entries in the first row. Hence, as $b_{i+1} <
    b_{i} - 2(1-c)$, the transformation still gives an element $(0, 0,
    \mathbf{b})$ with each $b_{i} < 0$. Additionally, $\{1, b_{1}, \ldots,
    b_{n}\}$ is still $\Z$-linearly independent, as the transformation merely
    sends some $b_{i}$ to $b_{i} - b_{1}.$ Thus, applying Lemma \ref{kronecker}
    it follows that $\overline{U} \cap B$ is multiplicatively dense.
\end{proof}

\section{\bf Some examples}\label{s:examples}

One may ask whether nonseparated subsemigroups that are not groups do exist in the solvable Lie groups under consideration. The following example in $G_1$, that is the lift of a dense subsemigroup of $\R$, was suggested to us by the referee. Here we use the model for $G_1$ presented 
in Section 1, so the operation on the first component of $G_1$ is given by the usual addition of real numbers:

\begin{example} Consider the following dense, and hence nonseparated, subsemigroup of $G_1$:
\[
S=\{(x,y); (x,y)\in G_1, x=-a+b\sqrt{2}, a,b\in \N-\{0\}\}.
\]
As the inverse of any $(x,y)\in S$ does not belong to $S$, $S$ is not a group.
\end{example}

For the next two examples we use the model for $G_1$ introduced before Lemma 2.8, thus the operation on the first component of $G_1$ is given by the usual multiplication of real numbers. Note that in this case nonseparation of $S\subseteq G_1$ follows if $(1,0)$ is in the interior of the (usual) convex hull of $S$.

The following example gives a nonseparated subsemigroup of $G_1$ that is not dense. The basic idea can be used to produce many other examples.

\begin{example} Let $a=(1/3,0), b=(2,2), c=(2,-2)\in G_1$ and let $S$ be the nonseparated subsemigroup of $G_1$ generated by $a,b,c$.  The identity of $G_1$ is $(1,0)$. As any element in $S$ has on the first component a number of type $2^m/3^n, m,n\in \N,$ with at least one of $m,n$ different from zero, the inverses of $a,b,c$ are not in $S$, so $S$ is not a group.
\end{example}

In the following example, the projection of the nonseparated semigroup $S\subseteq G_1$ on the first component is a group.

\begin{example} Let $a=(1/2,0), b=(2,\sqrt{3}), c=(2,-1), d=(1,0)$ and let $S$ be the nonseparated subsemigroup of $G_1$ generated by $a,b,c,d$. The projection of $S$ on the first component is isomorphic to the infinite cyclic group $\Z$. The inverse of $(2,\sqrt{3})$ is $(1/2,-\sqrt{3}/2)$. As the coefficient of $\sqrt{3}$ in the second component of an element of $S$ is always positive, the inverse of $(2,\sqrt{3})$ is not in $S$, thus $S$ is not a group.
\end{example}

\section{\bf Minimal generating sets}\label{s:generators}

In this section we find the minimal number of generators as a closed group and as a closed semigroup of $G_n$ and of $H_{mn}$. The proofs of Proposition~\ref{p:prop1} and Proposition~\ref{p:prop2} can be used to construct more examples of dense subsemigroups.

\begin{proposition}\label{p:prop1} The smallest size of a multiplicatively quasi-dense subset (or a multiplicatively dense subset) of $G_{n}$ is $n + 2$.
\end{proposition}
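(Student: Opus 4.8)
The plan is to identify $G_n$ with the special case $H_{1n}=\R^0\times G_n$ of $H_{mn}$, so that every result of this section applies to it; in particular multiplicative quasi-density and nonseparation coincide (Corollary to Theorem~\ref{s1rev2}), and Theorem~\ref{greatness} is available. Since $\exp\colon\g_n\to G_n$ is an analytic bijection (Lemma~\ref{exp}) and $\dim\g_n=n+1$, I will transfer everything to the finite set $\log(S)\subseteq\g_n\cong\R^{n+1}$ and work there.

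The main step, and the one I expect to be the crux, is to show that in $\g_n$ every hyperplane through the origin is a subalgebra. Using the basis $\{Y,Z_1,\dots,Z_n\}$ with $[Y,Z_i]=Z_i$ and $[Z_i,Z_j]=0$, one finds $[\,yY+\sum c_iZ_i,\;y'Y+\sum c_i'Z_i\,]=\sum_k(yc_k'-y'c_k)Z_k$. Writing a hyperplane as $\ker\lambda$ with $\lambda=\alpha Y^*+\sum\beta_kZ_k^*$, the bracket pairs with $\lambda$ to give $y(\beta\cdot\mathbf c')-y'(\beta\cdot\mathbf c)$, which vanishes once the defining relations $\alpha y+\beta\cdot\mathbf c=0$ and $\alpha y'+\beta\cdot\mathbf c'=0$ are substituted (both when $\alpha=0$ and when $\alpha\neq0$). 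By Lawson's correspondence (Theorem~\ref{thm-lawson01}) together with Lemma~\ref{exp}, the maximal subsemigroups of $G_n$ are therefore exactly the images $\exp(\{\lambda\le 0\})$ of the closed half-spaces of $\g_n$ bounded by a hyperplane through the origin. Hence $S$ is nonseparated if and only if $\log(S)$ is contained in no such half-space, i.e. if and only if $\log(S)$ positively spans $\R^{n+1}$ (equivalently, $0$ lies in the interior of its convex hull). This reduces the proposition to a purely convex-geometric count.

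With the reduction in hand, the lower bound is immediate: if $|S|\le n+1$ then the at most $n+1$ vectors of $\log(S)$ cannot positively span $\R^{n+1}$ (positive spanning of $\R^{n+1}$ requires at least $n+2$ vectors), so there is a nonzero $\lambda$ with $\lambda(\log s)\le 0$ for every $s\in S$, whence $S$ lies in the maximal subsemigroup $\exp(\{\lambda\le 0\})$ and is separated; thus $S$ is not multiplicatively quasi-dense, and not multiplicatively dense either, since a dense subsemigroup has closure $G_n$ and is trivially quasi-dense. For the upper bound I would take the $n+2$ points $\log(S)=\{e_1,\dots,e_{n+1},-(e_1+\cdots+e_{n+1})\}$, where $e_1,\dots,e_{n+1}$ is the standard basis of $\R^{n+1}$; these positively span $\R^{n+1}$ and satisfy $0=\tfrac{1}{n+2}\sum\log(S)$ with all coefficients positive, so $0$ is interior to their convex hull and $S$ is a nonseparated, hence multiplicatively quasi-dense, set of size $n+2$. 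Finally, applying Theorem~\ref{greatness} with $m=1$, the multiplicatively dense $(n+2)$-tuples are dense in the now nonempty set of nonseparated $(n+2)$-tuples and therefore exist; combined with the lower bound, both minimal sizes equal $n+2$.
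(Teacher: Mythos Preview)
Your proof is correct and takes a genuinely different route from the paper's. The key new ingredient is your observation that \emph{every} hyperplane through the origin in $\g_n$ is a subalgebra; combined with Lawson's theorem and the bijectivity of $\exp$, this converts nonseparation of $S$ in $G_n$ into the purely convex-geometric condition that $\log(S)$ positively spans $\R^{n+1}$, from which both bounds follow at once. The paper instead argues the lower bound by invoking the automorphism normalization behind Lemma~\ref{good structure} to put an $(n+1)$-element set into the form $(\ln a,\mathbf 0),(\ln c_i,\mathbf e_i)$ and then exhibits the explicit separating half-space $B_1\ge 0$; for the upper bound it writes down a concrete $(n+2)$-set in $G_n$ and uses Lemma~\ref{H_mn products} to produce enough elements in $\overline{U(S)}\cap B$ for $0$ to lie in the interior of their convex hull.

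Your approach is cleaner and yields more: it gives a complete characterization of nonseparated finite subsets of $G_n$ (those whose $\log$-image positively spans $\g_n$), not just the minimal cardinality. One small point worth making explicit is that the equivalence ``$S$ separated $\iff$ $\log(S)$ lies in a closed half-space'' uses $M=\exp(L(M))$, i.e.\ that $\exp(W)$ is already a semigroup for each half-space $W\subseteq\g_n$; this is implicit in the paper (see the proof of Theorem~\ref{s1fwd} and the remarks before Theorem~\ref{s1rev1}) and is a short direct computation in the two cases $\alpha=0$ and $\alpha\neq 0$. On the other hand, the paper's more hands-on argument via Lemmas~\ref{H_mn products} and~\ref{good structure} has the advantage that it transports verbatim to $H_{mn}$ (Proposition~\ref{p:prop2}), whereas your hyperplane-subalgebra trick is special to $m=1$: for $m\ge 2$ one checks that, e.g., $\ker(X_1^*+Z_1^*)\subseteq\h_{mn}$ is \emph{not} a subalgebra.
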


\begin{proof} The smallest size of a multiplicatively quasi-dense subset and of a multiplicatively dense subset are equal due to the Semigroup Conjecture 2. Let $B$ be the $\R^n$-component of $G_n$. Suppose by contradiction that a multiplicatively quasi-dense subset $S$ has size $n + 1$. Then, as in the proof of Lemma 2.12, there exists an automorphism $\Phi$ such that $\Phi(S)$ consists of $(\ln a, \mathbf{0}),  (\ln c_{i}, \mathbf{e_{i}})$ where $\ln a > 0$ and $c_{i}>0$. This implies that $\Phi(S)$ is contained in the maximal subsemigroup defined by $B_{1} \ge 0$ where $B_{1}$ is the first component $B$. Hence, $\Phi(S)$ is separated and hence not multiplicatively quasi-dense, a contradiction.

To finish the proof, we exhibit a multiplicatively quasi-dense subset $S$ of size $n + 2$. Let $S$ consist of $(1, \mathbf{0}), (0, \mathbf{e_{i}}), 1\le i\le n,$ and  $(-\sqrt{2}, -1, \ldots, -1)$. The projection of $S$ onto the $A$ coordinate is multiplicatively quasi-dense as it is nonseparated. Thus, by Lemma 2.5, it is enough to show that $\overline{U(S)} \cap B$ is multiplicatively quasi-dense in $B$. Lemma 2.7 applied to $(1, 0), (-\sqrt{2}, -1, \ldots, -1)$ gives $z = (0, \frac{-1}{1 - e^{-1}}, \ldots, \frac{-1}{1 - e^{-1}}) \in \overline{U(S)} \cap B$. Then, as the convex hull of the elements $z,(0, \mathbf{e_{i}}),1\le i\le n,$ has $0\in B$ in the interior, we see that $\overline{U(S)} \cap B$ is multiplicatively quasi-dense in $B$, and hence $S$ is multiplicatively quasi-dense.
\end{proof}

We now prove an analog of Proposition~\ref{p:prop1} for $H_{mn}$.

\begin{proposition}\label{p:prop2} The smallest size of a multiplicatively quasi-dense subset (or a multiplicatively dense subset) of $H_{mn}$ is $\max \{n + 2, m+1\}$.
\end{proposition}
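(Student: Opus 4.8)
The plan is to prove two matching bounds. Writing $H_{mn}=\R^{m-1}\times G_n$ with coordinates $(v,a,\mathbf{b})$, $v\in\R^{m-1}$, I will exploit two projections: the homomorphism $\pi_1\colon H_{mn}\to\R^m$ with kernel $B=\R^n$ (the sequence of Lemma~\ref{exact}), and the direct-factor projection $\pi\colon H_{mn}=\R^{m-1}\times G_n\to G_n$. The lower bound $|S|\ge\max\{n+2,m+1\}$ for a multiplicatively quasi-dense set $S$ comes from pushing $S$ through each projection, and the upper bound comes from exhibiting a quasi-dense set of exactly this size.

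For the lower bound, recall that quasi-dense is equivalent to nonseparated (the Corollary after Theorem~\ref{s1rev2}). By Lemma~\ref{exact}(6), $\pi_1(S)$ is nonseparated in $\R^m$, so $0$ lies in the interior of its convex hull, which forces $|S|\ge|\pi_1(S)|\ge m+1$. Next I claim $\pi(S)$ is nonseparated in $G_n$: if $\pi(S)$ lay in a maximal subsemigroup $M$ of $G_n$, then by Theorem~\ref{thm-lawson01} $M$ is generated by $\exp(W)$ for a half-space $W\subseteq\g_n$ with subalgebra boundary $\partial W$; since $\R^{m-1}$ is central, $\R^{m-1}\oplus W$ is a half-space of $\h_{mn}$ whose boundary $\R^{m-1}\oplus\partial W$ is a subalgebra, so (as $H_{mn}$ satisfies the hypotheses of Theorem~\ref{thm-lawson01}) the semigroup it generates, namely $\pi^{-1}(M)=\R^{m-1}\times M$, is itself maximal and contains $S$ — contradicting nonseparation. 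Hence $\pi(S)$ is quasi-dense in $G_n$ and, by Proposition~\ref{p:prop1}, $|S|\ge|\pi(S)|\ge n+2$. Combining gives $|S|\ge\max\{n+2,m+1\}$.

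For the construction I build a set of size exactly $\max\{n+2,m+1\}$ whose $\R^m$-projection surrounds $0$ and from which Lemma~\ref{H_mn products} extracts a quasi-dense subset of $B$. The core has $z_+=(0,a_+,\mathbf{0})$ with $a_+>0$; the $n$ elements $z_i=(q_i,-\delta,(1-c_i)\mathbf{e}_i)$ with $c_i=e^{-\delta}<1$; and $z_-=(q_{n+1},-\delta,\mathbf{b}_-)$ with every entry of $\mathbf{b}_-$ negative. When $m\ge n+2$ I append $m-n-1$ elements $(q_k,-\delta,\mathbf{0})$; when $m\le n+1$ the core already has $n+2\ge m+1$ elements. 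Choosing the $q_k\in\R^{m-1}$ as the vertices of a simplex containing $0$ in its interior, a short computation shows that placing $z_+$ at height $a_+>0$ and all remaining points at height $-\delta$ puts $0$ in the interior of the convex hull of the $\R^m$-projections, so $\pi_1(S)$ is nonseparated. Applying Lemma~\ref{H_mn products} to $(z_i,z_+)$ gives $(0,0,\mathbf{e}_i)\in\overline U\cap B$ (the coefficient $1-c_i$ is chosen precisely so that $\frac{\mathbf{b}}{1-e^{-\delta}}=\mathbf{e}_i$), and applying it to $(z_-,z_+)$ gives $(0,0,\mathbf{b}_-/(1-e^{-\delta}))$, a vector with all entries negative; hence $\overline U\cap B$ has $0$ interior to its convex hull and is quasi-dense in $B$ by \cite[Lemma 2.12]{MNT1}. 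Then $\pi_1(S)$ nonseparated together with $\overline U\cap B$ quasi-dense lets me run the machinery of Lemma~\ref{exact}: part (7) makes $\pi_1(\overline U)$ closed, part (1) identifies it with the group $\overline{U(\pi_1(S))}$, and part (4) makes $\overline U$ a group; it is not contained in a codimension $1$ subgroup because any such $N$ containing $\overline U$ would contain $B$ (as $\overline U\cap B$ lies in no hyperplane of $B$), whence $\pi_1(N)$ would be a hyperplane of $\R^m$ containing $\pi_1(\overline U)$, a contradiction.

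The main obstacle is the case $m\ge n+2$, where the $m+1$ points must simultaneously positively span $\R^m$ (so that $0$ is interior to the convex hull of the $\R^m$-projections) and still permit extraction of all of $\mathbf{e}_1,\dots,\mathbf{e}_n$ together with a negative vector in $B$. The tension is that the $\R^{m-1}$-components $q_i$ needed to surround $0$ must be carried by the same elements $z_i$ that carry the basis vectors $\mathbf{e}_i$, since the appended $\mathbf{b}=\mathbf{0}$ elements cannot by themselves span the $\R^{m-1}$ directions. What makes this reconcilable is that Lemma~\ref{H_mn products} discards the $v$- and $a$-coordinates of its inputs, so the extraction of $B$ is insensitive to the choice of the $q_i$; verifying that a single configuration meets both demands at once, and that the appended elements disturb neither property, is the crux of the argument.
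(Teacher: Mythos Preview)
Your proof is correct and follows essentially the same approach as the paper: the lower bound via the two projections $\pi_1\colon H_{mn}\to\R^m$ and $\pi\colon H_{mn}\to G_n$ (invoking the minimal sizes $m+1$ and $n+2$ there), and the upper bound via an explicit set to which Lemma~\ref{H_mn products} is applied against $z_+$ to produce the standard basis $\mathbf{e}_i$ and a negative vector in $\overline U\cap B$. The only cosmetic differences are that you argue nonseparation of $\pi(S)$ directly via Theorem~\ref{thm-lawson01} (the paper simply invokes Lemma~\ref{exact}(6), which applies since $\pi$ is a quotient map), and you treat the two cases $m+1\le n+2$ and $m+1\ge n+2$ with a single simplex-based construction rather than the paper's separate case analysis; you also spell out more carefully why $\overline U$ is a group not contained in a codimension~1 subgroup, whereas the paper leaves this implicit.
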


\begin{proof} Note that $H_{mn} \cong \R^{m} \ltimes \R^{n} \cong \R^{m-1} \times G_{n}$. Let $\pi, \pi_{1}$ be the projections onto $\R^{m}, G_{n}$ respectively. Let $S$ be a multiplicatively quasi-dense subset. Then, by Lemma 2.5, $\pi(S)$ and $\pi_{1}(S)$ are multiplicatively quasi-dense. Hence, $\pi(S)$ must be of size $m + 1$ or greater (as any $m$ vectors can be mapped to the standard basis by an automorphism and hence cannot form a multiplicatively quasi-dense subset). Similarly, by Proposition~\ref{p:prop1}, $\pi_{1}(S)$ must be of size $n+2$ or greater. Hence, $S$ must be of size at least $\max \{m + 1, n + 2\}$.

To finish the proof, we exhibit a multiplicatively quasi-dense subset of size $\max\{m + 1, n + 2\}.$ We distinguish two cases:

Case 1: $m + 1 \le n + 2$. $S$ consists of the following elements:
\begin{enumerate}
\item
$z_{i} = (\mathbf{e_{i}}, \ln a_{i}, \mathbf{e_{i}}), a_{i} < 1, 1\le i\le m-1$. We use $m - 1 \le n$.
\item
$z = (\mathbf{0}, \ln a, \mathbf{0})$ for some $a > 1$.
\item
$z'_{i} =  (\mathbf{0}, \ln a_{i}, |1 - a_{i}|\mathbf{e_{i}}), a_{i} < 1, m\le i\le n$.
\item
The vectors $\pi(z_{i}) = (\mathbf{e_{i}}, \ln a_{i}),1\le i\le m-1,$ and $\pi(z) = (\mathbf{0}, \ln a)$ form a basis in $\R^{m}$. Consider a vector $\mathbf{v}\in\R^{n}$ that has all coordinated negative in the above basis. Note that adding $\mathbf{v}$ to the basis gives a nonseparated set in $\R^{n}$.
We then add to $S$ an element $z' = (\mathbf{v}, |1 - e^{v_{n}}| \mathbf{w})$, where $w = (-1, \ldots, -1), b < 1$. In addition, we can choose $\mathbf{v}$ such that $v_{n} < 0$ because one of the elements in the above basis is $(\mathbf{0}, \ln a)$ for $\ln a > 0$ and we can add enough negative multiples of this vector to $\mathbf{v}$ to make $v_{n} < 0$, without affecting the other components.
\end{enumerate}

Clearly $S$ has size $n + 2=\max \{m + 1, n + 2\}$. We show that $S$ is multiplicatively quasi-dense. Note that $\pi(S)$ is nonseparated by construction. Hence, if $B$ refers to the $\R^{n}$ components, we need to show that $\overline{U(S)} \cap B$ is multiplicatively quasi-dense in $B$. As $\pi(S)$ is multiplicatively quasi-dense, we apply Lemma 2.7 to $z_{i}, z$ and $z', z$ (which by the construction and the choice of $v_{n}$ have $A$ coordinates on opposite sides of $0$) to get $\{(0, 0, \mathbf{e_{i}}), (0, 0, \mathbf{w})\}\subseteq \overline{U(S)} \cap B$.
This subset is multiplicatively quasi-dense in $B$ as it contains $0$ in the interior of its convex hull. Hence $S$ is multiplicatively quasi-dense. This finishes Case 1.

Case 2: $m + 1 \ge n + 2$. $S$ consists of the following elements:
\begin{enumerate}
\item
$z_{i} = (\mathbf{e_{i}}, \ln a_{i}, |1 - a_{i}|\mathbf{e_{i}}), \ln a_{i} < 0, 1\le i\le n$. We use $n \le m - 1$.
\item
$z = (\mathbf{0}, \ln a, \mathbf{0}), \ln a > 0$.
\item
For $i$ from $n$ to $m-1$,
$z_{i} = (\mathbf{e_{i}}, 0, \mathbf{0}).$
\item
The vectors $\pi(z_{i}), \pi(z)$ form a basis for $\R^{m}$. Let $\mathbf{v}$ be a vector with negative coordinates in this basis and set $z' = (\mathbf{v}, |1 - e^{v_{n}}| \mathbf{w})$. As in the first case, we can choose $\mathbf{v}$ such that $v_{n} < 0$.

\end{enumerate}

Clearly $S$ has size $m + 1=\max \{m + 1, n + 2\}$. As $\pi(S)$ is nonseparated by construction, it suffices to show that $\overline{U(S)} \cap B$ is multiplicatively quasi-dense in $B$, where $B$ denotes the $\R^{n}$ component of $H_{mn}$. We apply Lemma 2.7 to $z,z_{i}$ for $1\le i\le n$, and $z, z',$ to get $\{(0, 0, \mathbf{e_{i}}), (0, 0, \mathbf{w})\} \subseteq \overline{U(S)} \cap B$.
This subset is multiplicatively quasi-dense in $B$ as it contains $0$ in the interior of its convex hull. Thus, $S$ is multiplicatively quasi-dense. This finishes Case 2.
\end{proof}

We now analyze the minimal number of generators of $H_{mn}$ as a closed group. Note that $\R^{m}$ requires at least $m + 1$ generators. Indeed, any generating subset must contain $m$ linearly independent elements, but if its size is $m$, we can apply an automorphism to turn the subset into the standard basis, which generates a closed, discrete lattice $\Z^{m}$ in $\R^{m}$.

We first analyze the case of $G_{n}$.

\begin{proposition}\label{p:prop3} The minimal number of generators of $G_{n}$ as a closed group is $n + 1$.
\end{proposition}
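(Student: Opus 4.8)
The plan is to establish the lower bound and upper bound separately, mirroring the structure of Proposition~\ref{p:prop1} but accounting for the fact that a closed \emph{group} (rather than closed semigroup) is being generated. For the lower bound, I would first argue that any generating set for $G_n$ as a closed group must contain at least $n+1$ elements. The key obstruction is that $G_n$ is $(n+1)$-dimensional, so any candidate generating set of size $\le n$ would have $\R$-span of dimension at most $n$ in the Lie algebra $\g_n$; by Lemma~\ref{exp}, exponentiating such a set lands in a proper submanifold, and the closed group it generates cannot be all of $G_n$. More carefully, with exactly $n$ generators, I would apply the automorphism analysis (Types A--C) to normalize the set; if the $A$-coordinates (the $\ln a$ values) are all zero, the set lies in the abelian subgroup $B\cong\R^n$ and generates at most a closed subgroup of $B$, never all of $G_n$. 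If some $A$-coordinate is nonzero, a type C automorphism and the explicit multiplication~\eqref{eq:1} should reduce to a lattice-type obstruction showing the closure is a proper subgroup.

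For the upper bound, I would exhibit an explicit set of $n+1$ elements generating $G_n$ as a closed group. A natural candidate is $(\ln a, \mathbf{0})$ for some $a>1$ together with $(\ln c_i, |1-c_i|\mathbf{e_i})$ or simply $(0, \mathbf{e_i})$ for $1\le i\le n$, possibly perturbed to achieve a suitable $\Z$-linear independence condition. The strategy is to use the exact-sequence machinery of Lemma~\ref{exact} with $B=\R^n$ the kernel and $A=\R$ the quotient: first show the projection to $A$ generates $\R$ as a closed group (immediate since an element with nonzero $A$-coordinate is present), then show that $\overline{U}\cap B$, where here $U$ denotes the \emph{group} generated, is all of $\R^n$. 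To obtain the latter I would invoke Lemma~\ref{H_mn products} (applied in the $G_1$-reduced setting, or directly to pairs with opposite-sign $A$-coordinates) to extract the standard basis vectors inside $B$, then combine with a Kronecker-type density argument as in Lemma~\ref{kronecker} to upgrade the discrete lattice to all of $\R^n$. Since we are generating a group, inverses are freely available, which is precisely what distinguishes this bound ($n+1$) from the semigroup bound ($n+2$) of Proposition~\ref{p:prop1}.

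The main obstacle I anticipate is the lower bound argument, specifically ruling out that $n$ elements could generate $G_n$ as a closed group. The subtlety is that a set of size $n$ can have $\R$-linearly independent logarithms spanning an $n$-dimensional subspace of the $(n+1)$-dimensional $\g_n$, so a naive dimension count on the Lie algebra is not by itself conclusive once one passes to the closed group generated (which need not be the image of a subalgebra). I expect the cleanest route is to reduce, via the automorphisms of Types A--C, to a normal form in which the obstruction becomes transparent: either the generators all have vanishing $A$-coordinate (trapping everything in the abelian $\R^n$) or, after normalization, the generated group projects to a discrete subgroup in some quotient, preventing density. Making this dichotomy airtight---and in particular handling the case where the $A$-coordinates are linearly related over $\Q$ to the $B$-displacements---is where the real work lies. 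Once the lower bound is secured, the upper bound should follow routinely from the Lemma~\ref{exact} and Lemma~\ref{kronecker} toolkit already developed.
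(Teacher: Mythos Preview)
Your proposal has the right skeleton but contains a genuine error in the upper bound and overcomplicates the lower bound.

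For the upper bound, the claim that ``the projection to $A$ generates $\R$ as a closed group (immediate since an element with nonzero $A$-coordinate is present)'' is false: a single nonzero real number generates only a discrete cyclic subgroup of $\R$. With your candidate set $(\ln a,\mathbf{0})$ together with $(0,\mathbf{e_i})$, the $A$-projection of the generated group is exactly $\Z\ln a$, and the closed group cannot be all of $G_n$. You must arrange at least two generators whose $A$-coordinates have irrational ratio. The paper takes $z=(-1,\mathbf{0})$ and $z_i=(\sqrt{2},(e^{\sqrt 2}-1)\mathbf{e_i})$, so that $\pi(S\cup S^{-1})\supseteq\{-1,\sqrt 2\}$ is nonseparated and dense. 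There is a second subtlety you gloss over: one application of Lemma~\ref{H_mn products} to $z,z_i$ yields only $(0,\mathbf{e_i})$ in $\overline U\cap B$, giving at best the lattice $\Z^n$. To obtain density in each $B_i=\R\mathbf{e_i}$ the paper multiplies $(0,\mathbf{e_i})$ by $z$ to get $(-1,e^{-1}\mathbf{e_i})\in G$ and then applies Lemma~\ref{H_mn products} \emph{a second time} (now with $z^{-1}$) to produce $(0,\tfrac{e^{-1}}{1-e^{-1}}\mathbf{e_i})$. Irrationality of $\tfrac{1}{e-1}$ then forces $G\cap B_i=B_i$. Your Kronecker plan would require an independent source of irrationality inside $B$ that you have not supplied.

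For the lower bound, you are making this harder than it is, and the obstruction is not a lattice phenomenon. If some generator has nonzero $A$-coordinate, a type~C automorphism sends it to $(\ln a,\mathbf{0})$; the $B$-components of the remaining at most $n-1$ generators then span a subspace $V\subseteq\R^n$ of dimension at most $n-1$. Since the multiplication rule~\eqref{eq:1} (and inversion) shows the $B$-component of any word lies in the $\R$-span of the generators' $B$-components, the entire group $\langle S\rangle$ has $B$-components in $V$, and closure preserves this. If instead all $A$-coordinates vanish, $S$ sits in the abelian subgroup $\R^n$ and cannot generate $G_n$. Either way the closed group is proper, with no discreteness argument needed.
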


\begin{proof} Let $p$ be the number of elements of a minimal generating set $S$. If $p\le n$, we can apply a type $A$ automorphism to make one of the elements in $S$ equal to $(\ln a, 0)$ for some $\ln a$. Then the $\R^{n}$ components of the group $\langle S \rangle$ generated by $S$ are contained in a codimension 1 hyperplane. Therefore $S$ does not generate $G_{n}$ as a group. To finish the proof, it suffices to construct a subset of size $n+1$ which generates  $G_{n}$ as a closed group. We consider the subset $S$ consisting of $z = (-1, \mathbf{0})$ and $z_{i} = (\sqrt{2}, (e^{\sqrt{2}} - 1)\mathbf{e_{i}}), 1\le i\le n$. Let $G$ be the closed subgroup of $G_{n}$ generated by $S$. Then, $G = \overline{U(S \cup S^{-1})}$. We let $B_{i}=\R \mathbf{e_{i}}$.

Now, if $\pi$ is the projection onto the $A$ coordinate, $\pi(S \cup S^{-1})$ contains $\sqrt{2}, -1$ and is hence nonseparated. Hence, we may apply Lemma 2.7, to $z, z_{i}$ to get $z'_{i} = (0, \mathbf{e_{i}}) \in \overline{U}(S \cup S^{-1}) = G$. Now, $z''_{i} = zz'_{i} = (-1,e^{-1} \mathbf{e_{i}})\in G.$
Hence, applying Lemma 2.7 to $z^{-1}, z''_{i}$ gives us $(0, \frac{e^{-1}}{1 - e^{-1}} \mathbf{e_{i}}) \in G$.

Now, as $G$ contains $(0, \frac{e^{-1}}{1 - e^{-1}} \mathbf{e_{i}})$ and $-z'_{i}$, and $\frac{e^{-1}}{1 - e^{-1}}$ is irrational, $G \cap B_{i}$ is dense and closed in $B_{i} \cong \R$ and hence coincides with $B_{i}$. Thus, $G$ contains $B_{i}, 1\le i\le n$ and hence contains the normal $\R^{n}$ subgroup of $G_{n}$, which we denote as $B$. Due to Lemma 2.5, this implies that $\pi(G)$ is closed. But $\pi(G)$ is a closed subgroup of $\R$ containing $1$ and $-\sqrt{2}$, so $\pi(G)=\R$. As $G$ contains $B$ and a representative element for each coset in $G_{n}/B$, one has $G = G_{n}$ and $p = n+1$.
\end{proof}

We prove now an analog of Proposition~\ref{p:prop3} for $H_{mn}$.

\begin{proposition} The minimal number of generators of $H_{mn}$ as a closed group is $\max\{m + 1, n + 1\}$.
\end{proposition}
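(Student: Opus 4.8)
The plan is to follow the same template as Proposition~\ref{p:prop3}, splitting into a lower bound and an explicit construction. For the lower bound, I would argue that $H_{mn}$ projects onto both $\R^m$ (via $\pi$) and $G_n$ (via $\pi_1$), and a generating set for $H_{mn}$ must project to a generating set for each factor. Since $\R^m$ needs at least $m+1$ generators (noted in the paragraph preceding the proposition: $m$ linearly independent elements can be sent to the standard basis generating only the lattice $\Z^m$) and, by a group analog of Proposition~\ref{p:prop3}, $G_n$ needs at least $n+1$, the minimal size is at least $\max\{m+1, n+1\}$. I would phrase the $\R^m$ bound via the closed-subgroup structure theorem for $\R^m$: any closed proper subgroup lies in a hyperplane-times-lattice, so $m$ elements cannot generate $\R^m$ densely after the normalizing automorphism.

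For the upper bound I would construct a set of size $\max\{m+1, n+1\}$ generating $H_{mn}$ as a closed group, handling the two cases $m \ge n$ and $m < n$ separately as in Proposition~\ref{p:prop2}. The construction mirrors Proposition~\ref{p:prop3}: include an element $z = (\mathbf{0}, -1, \mathbf{0})$ (or similar), elements $z_i$ carrying the standard basis $\mathbf{e_i}$ into the $B \cong \R^n$ component with an irrational scaling like $(\ln c_i)$ chosen so that after applying Lemma~\ref{exact} and Lemma~\ref{H_mn products} to pairs $z, z_i$ one recovers $(0,0,\mathbf{e_i})$ and an irrational multiple, forcing $G \cap B_i = B_i$; and enough elements $(\mathbf{e_j}, *, *)$ to span the $\R^{m-1}$ directions. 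The key mechanism, exactly as before, is: apply Lemma~\ref{H_mn products} to elements with $A$-coordinates of opposite sign to land inside $B$, then use irrationality plus Lemma~\ref{exact}(4) to upgrade a dense cyclic subgroup of $B_i \cong \R$ to all of $B_i$, obtaining the full normal subgroup $B = \R^n$.

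Once $G \supseteq B$, I would invoke Lemma~\ref{exact}, specifically part (7) (with $B = \R^n$ multiplicatively quasi-dense, indeed all of $B$) to conclude $\pi(G)$ is closed in $\R^m$. Then $\pi(G)$ is a closed subgroup of $\R^m$ containing the images $\pi(z_i)$, which by construction include $m$ linearly independent vectors together with a vector having an irrational ratio in some coordinate, so $\pi(G) = \R^m$. Finally, since $G$ contains the normal subgroup $B$ and a representative for every coset in $H_{mn}/B \cong \R^m$, it follows that $G = H_{mn}$, giving a generating set of the claimed size.

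The main obstacle I anticipate is the bookkeeping in the two-case construction: I must ensure the chosen elements simultaneously (i) project to $m$ linearly independent vectors in $\R^m$ whose closed group is dense (requiring an irrational ratio among the $\ln a_i$ and the extra element), (ii) yield, after the Lemma~\ref{H_mn products} pairings, a full basis of $B$ each with an irrational scaling so that $G\cap B_i = B_i$, and (iii) stay within the total budget $\max\{m+1, n+1\}$ in both the $m \ge n$ and $m < n$ regimes. Verifying that the same element can serve double duty (contributing both an $\R^{m-1}$ direction and a $B$ direction when $m-1 \le n$, or padding with pure-$\R^{m-1}$ generators when $n \le m-1$) is the delicate part, but it parallels the accounting already carried out in Proposition~\ref{p:prop2} and should go through by the same reasoning.
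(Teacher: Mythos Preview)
Your proposal is essentially correct and mirrors the paper's argument closely in the lower bound and in the $n+1 \ge m+1$ construction. The lower bound via the projections $\pi$ onto $\R^m$ and $\pi_1$ onto $G_n$ is exactly what the paper does, and your Case~1 construction (elements carrying the $\mathbf{e_i}$, applying Lemma~\ref{H_mn products} to land in $B$, using an irrational ratio to force $G\cap B_i = B_i$, then Lemma~\ref{exact}(7) to close $\pi(G)$) is the paper's Case~1 in outline.

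The one genuine divergence is your Case~2 ($m+1 > n+1$). You propose to repeat the explicit Prop.~\ref{p:prop3}-style construction there as well. The paper instead takes a shortcut: since $m+1 \ge n+2$, Case~2 of Proposition~\ref{p:prop2} already produces a multiplicatively quasi-dense subset of size $m+1$, and Theorem~\ref{greatness} (Semigroup Conjecture~2) upgrades this by a small perturbation to a multiplicatively \emph{dense} subset, which then generates $H_{mn}$ as a closed semigroup and a fortiori as a closed group. Your approach would work too, but demands the double-duty bookkeeping you flag as delicate; the paper sidesteps that entirely by invoking the main theorem already proved.

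One small correction: your appeal to Lemma~\ref{exact}(4) to ``upgrade a dense cyclic subgroup of $B_i \cong \R$ to all of $B_i$'' is misplaced. Lemma~\ref{exact}(4) concerns when $\overline{U}$ is a group given that $\pi(\overline{U})$ and $\overline{U}\cap B$ are groups; it says nothing about density in $\R$. What you actually need there is the elementary fact (used verbatim in Proposition~\ref{p:prop3}) that a closed subgroup of $\R$ containing two elements with irrational ratio is all of $\R$.
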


\begin{proof} Let $\pi$ and $\pi_{1}$ be projections onto $\R^{m}$ and $G_{n}$ respectively. If $S$ generates $H_{mn}$ as a closed group, then $\pi(S), \pi_{1}(S)$ also generate $\R^{m}$ and $G_{n}$ as closed groups respectively. Thus, $\pi(S)$ must have size $\ge m + 1$ by the remark preceding Proposition~\ref{p:prop3}, and $\pi_{1}(S)$ must have size $\ge n+1$ by Proposition~\ref{p:prop3}. Thus, so must $S$. Hence, to prove the proposition, it suffices to construct a subset of size $\max \{m + 1, n + 1\}$ which generates $H_{mn}$ as a closed group. We distinguish two cases:

Case 1: $n + 1 \ge m + 1$. $S$ consists of the following elements:
\begin{enumerate}
\item
$z = (\mathbf{0}, 1, \mathbf{0})$.
\item
$z_{i} = (\mathbf{e_{i}}, -\sqrt{2}, |1 - e^{-\sqrt{2}}|\mathbf{e_{i}}), 1\le i\le m-1$. We use $m - 1 \le n-1$.
\item
$z_{i} = (\mathbf{0}, -\sqrt{2}, |1 - e^{-\sqrt{2}}|\mathbf{e_{i}}), m-1\le i\le n-1$.
\item
$z_{n} = (\mathbf{v}, |1 - e^{-v_{n}}|\mathbf{e_{n}})$, where $\mathbf{v}$ is a vector in $\R^{m}$ with negative and irrational coordinates relative to the basis defined by $\pi(z_{i}), \pi(z)$ for $1\le i\le m-1$. As in the previous proposition, we can choose $\mathbf{v}$ such that $v_{n} < 0$.
\end{enumerate}

This subset has size $n + 1=\max\{m + 1, n + 1\}$. Additionally, by construction, $\pi(S)$ is multiplicatively dense and hence nonseparated. Let $G = \overline{U(S \cup S^{-1})}$ be the closed group generated by $S$. Then, if we show that $G \cap B = B$ (where $B$ is the $\R^{n}$ portion of $H_{mn}$), then applying Lemma 2.5 gives that $\pi(G)$ is closed and hence, as it is dense in $\R^{m}$, coincides with $\R^{n}$. So $G$ contains $B$ and a representative of each coset in $H_{mn}/B$ and has to coincide with $H_{mn}$. It remains to show $G \cap B$ is multiplicatively dense.

Now, as $\pi(G)$ is nonseparated, we can apply Lemma 2.7 to $z, z_{i} $ to get $z'_{i} = (\mathbf{0}, \mathbf{0}, \mathbf{e_{i}}) \in G \cap B$.
On the other hand, $zz'_{i} = (-1, (1 - e^{-1}) \mathbf{e_{i}})$. We now apply the same argument as in Proposition~\ref{p:prop3} to see that $G \cap B = B$ as $G \cap B_{i} = B_{i}$ where $B_{i} = \R\mathbf{e_{i}}$. This proves that $S$ generates $H_{mn}$ as a closed group, finishing the proof under the conditions of Case 1.

Case 2: $m + 1 > n + 1$. In this case $m + 1 \ge n + 2$. So, by Case 2 of Proposition~\ref{p:prop2}, we have a multiplicatively quasi-dense subset of size $m+1$. By Theorem 2.16, we then have (by applying a small perturbation) a multiplicatively dense subset of size $m +1$. This subset generates $H_{mn}$ as a closed semigroup and hence also as a closed group.
\end{proof}

\bibliographystyle{plain}

\begin{thebibliography}{99}

\bibitem{AV} H.~Abels, E.~B.~Vinberg, {\it Generating semisimple groups by tori}, Journal of Algebra \textbf{328} (2011) 114--121.

\smallskip

\bibitem{K} H.M. Bacon, {\it An extension of Kronecker's theorem}, Ann. of Math. \textbf{35} (1934) 776--786.

\smallskip

\bibitem{J} M.~Javaheri. \emph{Dense $2$-generators subsemigroups of $2\times 2$ matrices}.
Journal of Mathematical Analysis and Applications, \textbf{387} (2012) 103--113.

\smallskip

\bibitem{KH} A.~Katok, B.~Hasselblatt, \emph{Introduction to the Modern Theory
  of Dynamical Systems}. Cambridge University Press, 1995.

\smallskip

\bibitem{L} J.~D.~Lawson. \emph{Maximal subsemigroups of {L}ie groups that are total}.
Proceedings of the Edinburgh Mathematical Society. Series II, \textbf{30} (1987) 479--501.

\smallskip

\bibitem {MNT1} I.~Melbourne, V.~Ni{\c t}ic{\u a}, A.~T{\"o}r{\"o}k.
  \emph{Stable transitivity of certain noncompact extensions of hyperbolic
    systems}. Annales Henri Poincar\'e \textbf{6} (2005) 725--746.

\smallskip

\bibitem {MNT2} I.~Melbourne, V.~Ni{\c t}ic{\u a}, A.~T{\"o}r{\"o}k.
  \emph{A note about stable transitivity of noncompact extensions of
    hyperbolic systems}. Discrete \& Contin. Dynam. Systems \textbf{14}
  (2006) 355--363.

\smallskip

\bibitem {MNT3} I.~Melbourne, V.~Ni{\c t}ic{\u a}, A.~T{\"o}r{\"o}k.
  \emph{Transitivity of Euclidean-type extensions of hyperbolic systems}.
  Ergod. Th. \& Dynam. Sys. \textbf{29} (2009) 1585--1602.

\smallskip

\bibitem {MNT4} I.~Melbourne, V.~Ni{\c t}ic{\u a}, A.~T{\"o}r{\"o}k.
  \emph{Transitivity of Heisenberg group extensions of hyperbolic systems}.
  Ergod. Th. \& Dynam. Sys. \textbf{32} (2012) 223--235.

\smallskip

\bibitem{N-nil} V.~Ni\c tic\u a. \emph{Stably transitivity for extensions
    of hyperbolic systems by semidirect products of compact and nilpotent
    Lie groups}. Discrete \& Contin. Dynam. Systems, \textbf{29} (2011) 1197--1203.

\smallskip

\bibitem{NP} V.~Ni\c tic\u a, M.~Pollicott. \emph{Transitivity of Euclidean
    extensions of Anosov diffeomorphisms}. Ergod. Th. \& Dynam. Sys.
  \textbf{25} (2005) 257--269.


\smallskip

\bibitem{N-T-nil} V.~Ni\c tic\u a, A.~T\"or\"ok. \emph{Stable transitivity of Heisenberg group extensions of hyperbolic
  systems}, submitted to Nonlinearity.



\end{thebibliography}

\end{document}